\let\origsection=\section \def\section{\@ifstar{\origsection*}{\mysection}} 
\def\mysection{\@startsection{section}{1}\z@{.7\linespacing\@plus\linespacing}{.5\linespacing}{\normalfont\scshape\centering\S}}
\def\rmlabel{\upshape({\itshape \roman*\,})}
\let\polishlcross=\l
\def\l{\ifmmode\ell\else\polishlcross\fi}
\let\emptyset=\varnothing
\let\setminus=\smallsetminus
\def\moverlay{\mathpalette\mov@rlay}
\def\mov@rlay#1#2{\leavevmode\vtop{   \baselineskip\z@skip \lineskiplimit-\maxdimen
   \ialign{\hfil$\m@th#1##$\hfil\cr#2\crcr}}}
\newcommand{\charfusion}[3][\mathord]{
    #1{\ifx#1\mathop\vphantom{#2}\fi
        \mathpalette\mov@rlay{#2\cr#3}
      }
    \ifx#1\mathop\expandafter\displaylimits\fi}
\newtheorem{theorem}             {Theorem}[section]
\newtheorem{lemma}     	[theorem] {Lemma}        
\newtheorem{property}  	[theorem] {Property}   
\newtheorem{definition}	[theorem] {Definition}   
\newtheorem{proposition}[theorem] {Proposition}   
\newtheorem{corollary}	[theorem] {Corollary}
\newtheorem{fact}	[theorem] {Fact}
\def\calq{{\mathcal Q}}
\let\epsilon\varepsilon
\let\eps\varepsilon
\def\ind{\mathop{\text{\rm ind}}\nolimits}
\def\BDD{\mathop{\text{\rm BDD}}\nolimits}
\def\TUPLE{\mathop{\text{\rm TUPLE}}\nolimits}
\def\ni{\mathop{\neg\text{\rm ind}}\nolimits}
\def\clean{\mathop{\delta\text{\rm -clean}}\nolimits}
\def\poll{\mathop{\delta\text{\rm -poll}}\nolimits}
\def\stb{\mathop{\text{\rm stb}}\nolimits}
\def\set{\mathop{\text{\rm set}}\nolimits}
\def\ord{\mathop{\text{\rm ord}}\nolimits}
\begin{document}

\title{Counting results for sparse pseudorandom hypergraphs~I}

\author[Y.~Kohayakawa]{Yoshiharu Kohayakawa}

\author[G.~O.~Mota]{Guilherme Oliveira Mota}
\address{Instituto de Matem\'atica e Estat\'{\i}stica, Universidade de S\~ao Paulo, S\~ao Paulo, Brazil}
\email{\{\,yoshi\,|\,mota\,\}@ime.usp.br}

\author[M.~Schacht]{Mathias Schacht}
\address{Fachbereich Mathematik, Universit\"at Hamburg, Hamburg, Germany}
\email{schacht@math.uni-hamburg.de}

\author[A.~Taraz]{Anusch Taraz}
\address{Institut f\"ur Mathematik, Technische Universit\"at Hamburg--Harburg, Hamburg, Germany}
\email{taraz@tuhh.de}

\thanks{Y.~Kohayakawa was partially supported by FAPESP (2013/03447-6, 2013/07699-0), 
	CNPq (310974/2013-5, 459335/2014-6), NUMEC/USP (Project MaCLinC/USP) and the 
	NSF (DMS~1102086). 
	G.~O.~Mota was supported by FAPESP (2009/06294-0, 2013/11431-2, 2013/20733-2). 
	M.~Schacht was supported by the Heisenberg-Programme of the DFG (grant SCHA 1263/4-1). 
	A.~Taraz was supported in part by DFG grant TA 309/2-2.
	The cooperation was supported by a joint CAPES/DAAD PROBRAL (333/09 and 430/15).}

\keywords{Hypergraphs, Counting lemma, pseudorandomness}
\subjclass[2010]{05C60 (primary), 05C65 (secondary)}

\begin{abstract}
We establish a so-called counting lemma that allows embeddings of certain linear uniform 
hypergraphs into sparse pseudorandom hypergraphs, generalizing a result for graphs [Embedding graphs with bounded degree in 
sparse pseudo\-random graphs, Israel J. Math. 139 (2004), 93--137]. 
Applications of our result are presented in the companion paper [Counting results for sparse pseudorandom hypergraphs II].
\end{abstract}

\maketitle

\section{Introduction}
Many problems in extremal combinatorics concern embeddings of graphs and hypergraphs of fixed isomorphism type into a large host 
graph/hypergraph. The systematic study of pseudorandom graphs was initiated by Thomason~\cites{Th87a,Th87b} and since then many 
embedding results have been developed for host pseudorandom graphs. For example, a well-known consequence of the 
Chung--Graham--Wilson theorem~\cite{ChGrWi88} asserts that dense pseudorandom graphs $G$ contain the ``right'' number of copies 
of any 
fixed graph, where ``right'' means approximately  the same number of copies as expected in a random graph with the same density 
as $G$. In view of this result, the question arises to which extent it can be generalized to sparse pseudorandom graphs, and 
results in this direction can be found in~\cites{ChGr02,ChGr08,CoFoZh13+,KoRoSi04}. We continue this line of research for 
embedding properties of sparse pseudorandom hypergraphs.
Counting lemmas for pseudorandom hypergraphs were also investigated by Conlon, Fox and Zhao~\cite{CoFoZh13+b}.

Let $G=(V,E)$ be a $k$-uniform hypergraph.
For every $1\leq i \leq k-1$ and every $i$-element set $\{x_1,\ldots,x_{i}\}\in {V\choose i}$, let
\begin{equation*}
N_G(x_1,\ldots,x_{i})=\left\{\{x_{i+1},\ldots,x_k\}\in{V\choose k-i}\colon \{x_1,\ldots,x_{k}\}\in 
E\right\},
\end{equation*}
i.e., $N_G(x_1,\ldots,x_{i})$ is the set of elements of ${V\choose k-i}$ that form an edge of $G$ 
together 
with $\{x_1,\ldots,x_{i}\}$. In what follows, our hypergraphs will usually be $k$-uniform and will have $n$ vertices. The 
parameters $n$ and $k$ will often be omitted if there is no danger of confusion.

\begin{property}[Boundedness Property]
Let $k\geq 2$. We define $\BDD(d,C,p)$ as the family of $n$-vertex 
$k$-uniform hypergraphs 
$G=(V,E)$ such that, for all $1\leq r\leq d$ and all families of distinct sets $S_1,\ldots,S_r\in 
{V\choose k-1}$, we have
 \begin{equation}  |N_G(S_1)\cap\ldots\cap N_G(S_r)|  \leq Cnp^r.
 \end{equation}
\end{property}

\begin{property}[Tuple Property]
Let $k\geq 2$. We define $\TUPLE(d,\delta,p)$ as the family of $n$-vertex 
$k$-uniform hypergraphs 
$G=(V,E)$ such that, for all $1\leq r\leq d$, the following holds.
\begin{equation}\big||N_G(S_1)\cap\ldots\cap N_G(S_r)|-np^r\big|<\delta np^r
\end{equation}
for all but at most $\delta{{n\choose k-1}\choose r}$ families $\{S_1,\ldots,S_r\}$ of
$r$ distinct sets of ${V\choose k-1}$.
\end{property}

The notion of pseudorandomness considered in this paper is given in Definition~\ref{def:pseudo} below.

\begin{definition}\label{def:pseudo}
A $k$-uniform hypergraph $G=(V,E)$ is \emph{$(d_1,C,d_2,\delta,p)$-pseudorandom} if $|E|=p{n\choose k}$ and $G$ 
satisfies 
$\BDD(d_1,C,p)$ and $\TUPLE(d_2,\delta,p)$.
\end{definition}

Note that property $\TUPLE$ implies edge-density close to $p$, but we 
put the condition $|E|=p{n\choose k}$ in the definition of pseudorandomness for 
convenience. We remark that similar notions of pseudorandomness in hypergraphs were 
considered in~\cites{FrKr12,FrKrLo12}.

Our main result, Theorem~\ref{thm:emb} below, estimates the number of copies of some linear $k$-uniform 
hypergraphs in sparse pseudorandom hypergraphs.
An \emph{embedding} of a hypergraph $H$ into a hypergraph $G$ is an injective mapping $\phi\colon V(H)\to 
V(G)$ such that
$\{\phi(v_1),\ldots,\phi(v_k)\}\in 
E(G)$ whenever $\{v_1,\ldots,v_k\}\in E(H)$.
An edge~$e$ of a linear $k$-uniform hypergraph $E(H)$ is called \emph{connector} if there exist $v\in 
V(H)\setminus e$ and $k$ edges $e_1,\ldots,e_k$ containing $v$ such that $|e\cap e_i|=1$ for $1\leq i\leq k$. Note 
that, for $k=2$, a connector is an edge that is contained in a triangle. Moreover, since 
$H$ is linear, $e\cap e_i\neq e\cap e_j$ for all $1\leq i<j\leq k$.
Given a $k$-uniform hypergraph $H$, let
\begin{equation*}
d_H=\max\{\delta(J)\colon J\subset H\}\,\text{ and }\,
D_H=\min\{k d_H,\Delta(H)\},
\end{equation*}
where $\delta(J)$ and $\Delta(J)$ stand, respectively, for the minimum and maximum 
degree of a vertex in $V(J)$. Note that $d_H\leq D_H$.

Kohayakawa, R\"odl and Sissokho \cite{KoRoSi04} proved the following counting lemma: 
{\sl given a fixed triangle-free graph $H$ and $p=p(n)\gg n^{-1/D_H}$ with $p=o(1)$, for all
$\varepsilon>0$ and $C>1$, there exists $\delta>0$ such that, if $G$ is 
an $n$-vertex $(D_H,C,2,\delta,p)$-pseudorandom graph and $n$ is sufficiently large, then
\begin{equation*}
{\big||\mathcal{E}(H,G)|-n^{v(H)}p^{e(H)}\big|<\varepsilon n^{v(H)}p^{e(H)}},
\end{equation*}
where $\mathcal{E}(H,G)$ stands for the set of all embeddings from $H$ into $G$. }
Our main theorem generalizes this result for $k$-uniform hypergraphs. 

\begin{theorem}\label{thm:emb}
Let $k\geq 2$ and $m\geq 4$ be integers and let 
$\eps>0$ and $C>1$ be fixed. Let~$H$ be a linear $k$-uniform 
connector-free hypergraph on 
$m$ vertices. Then there exists $\delta>0$ for which 
the following holds for any $p=p(n)$ with $p\gg n^{-1/D_H}$ and $p=o(1)$ and for any sufficiently large $n$. 

If $G$ is an $n$-vertex $k$-uniform hypergraph that is 
$(D_H,C,2,\delta,p)$-pseudorandom, then
\begin{equation*}
\big||\mathcal{E}(H,G)| - n^{m}p^{e(H)}\big| < \eps n^{m}p^{e(H)}.
\end{equation*}
\end{theorem}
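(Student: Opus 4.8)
The plan is to count embeddings of $H$ into $G$ edge by edge, revealing the vertices of $H$ in a carefully chosen order and estimating at each step the number of ways to extend a partial embedding. Fix a linear order $v_1,\dots,v_m$ on $V(H)$ with the property that every $v_j$ with $j\geq 2$ lies in at most one edge contained in $\{v_1,\dots,v_j\}$ — for a linear hypergraph such an order exists (build it greedily so that each new vertex "closes" at most one edge; this is where $m\geq 4$ and linearity are used to avoid creating complicated link structures). For $2\leq j\leq m$, let $t_j$ be the number of "back-edges" at step $j$, i.e.\ edges $e\in E(H)$ with $\max e=v_j$; then each such edge, together with the already-embedded images of its other $k-1$ vertices, imposes one neighbourhood constraint, and $\sum_j t_j=e(H)$. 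The heart of the argument is the claim that, for a suitable "good" partial embedding $\phi$ of $v_1,\dots,v_{j-1}$, the number of valid images for $v_j$ is $(1+o(1))\,n\,p^{t_j}$.

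To prove that extension estimate I would distinguish two cases according to $t_j$. If $t_j=1$, the set of candidates for $v_j$ is $N_G(S)$ for a single $(k-1)$-set $S$ of already-embedded vertices, minus the $O(1)$ forbidden vertices (images of earlier $v_i$'s); by $\TUPLE(2,\delta,p)$ — more precisely its $r=1$ instance, which says $|N_G(S)|=(1\pm\delta)np$ for all but a $\delta$-fraction of the $S$ — this has size $(1\pm\delta)np - O(1)=(1+o(1))np$, provided $S$ is not one of the exceptional sets; controlling the exceptional sets is handled inductively by declaring a partial embedding "bad" if it ever uses an exceptional tuple, and showing via $\BDD$ that bad embeddings are negligible in number. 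If $t_j\geq 2$, the candidate set is $N_G(S_1)\cap\dots\cap N_G(S_{t_j})$; here the upper bound $Cnp^{t_j}$ comes directly from $\BDD(D_H,C,p)$ (note $t_j\leq d_H\leq D_H$ since the back-edges at $v_j$ witness degree $t_j$ in a subhypergraph), and the matching lower bound $(1-o(1))np^{t_j}$ comes from applying $\TUPLE(2,\delta,p)$ to pairs of the $S_i$'s together with an inclusion–exclusion / second-moment computation — this is exactly the place where the connector-free hypothesis enters, because a connector edge is precisely what would force us to intersect $k$ neighbourhoods around a single new vertex in a way that the $\TUPLE$ (which only controls up to $r=2$, but here $t_j$ can be larger) cannot handle, whereas connector-freeness bounds the relevant pairwise correlations. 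The density threshold $p\gg n^{-1/D_H}$ is what guarantees $np^{t_j}\to\infty$ for every relevant $t_j\leq D_H$, so that the "$-O(1)$" corrections and the exceptional-set losses are genuinely lower-order.

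Assembling the pieces: multiplying the per-step counts gives $|\mathcal E(H,G)|=(1+o(1))\prod_{j=2}^m n\,p^{t_j}=(1+o(1))\,n^{m}p^{e(H)}$ over good embeddings, and a separate argument bounds the number of bad embeddings by $o(n^m p^{e(H)})$: a bad embedding uses at least one exceptional $(k-1)$-tuple at some step $j$, there are at most $\delta\binom{\binom{n}{k-1}}{t_j}$-many such tuples by $\TUPLE$, and every other step is still bounded above using $\BDD$, so the total is at most $\delta\cdot O(n^{m}p^{e(H)})$ after summing over the $O(1)$ choices of which step goes bad. Choosing $\delta$ small in terms of $\eps$, $C$, $k$ and $m$ (but independent of $n$ and $p$) then yields the two-sided bound $\big||\mathcal E(H,G)|-n^m p^{e(H)}\big|<\eps n^m p^{e(H)}$. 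I expect the main obstacle to be the lower bound in the case $t_j\geq 3$: the $\TUPLE$ property is only assumed for $r\leq 2$, so the lower bound on a $t_j$-fold intersection of neighbourhoods must be bootstrapped from pairwise information, and making this quantitatively tight — controlling the "typicality" of the relevant $S_i$'s jointly, and showing connector-freeness suffices to rule out the bad correlation pattern — is the delicate technical core of the proof.
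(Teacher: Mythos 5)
Your overall architecture — fix a degenerate ordering of $V(H)$, extend a partial embedding one vertex at a time, use $\BDD$ to control the embeddings that hit ``exceptional'' tuples and $\TUPLE$ to count the extensions of the non-exceptional ones — is exactly the paper's architecture. But there are two substantive problems, one small inconsistency and one genuine misunderstanding of where the hypothesis on $H$ is used.

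The small one first: you claim an ordering exists in which every $v_j$ closes at most one back-edge and then immediately discuss the cases $t_j\geq 2$ and $t_j\geq 3$, which cannot occur under that claim. What the paper actually uses is a $d_H$-degenerate ordering (extended via Proposition~\ref{lemma:creatingOrdering} to a $D_H$-degenerate one when initial segments must be prescribed), so $t_j$ can be as large as $d_H$; this is why $\BDD(D_H,C,p)$ and the threshold $p\gg n^{-1/D_H}$ appear.

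The genuine gap is the role you assign to connector-freeness. You locate it in the passage from $\TUPLE$ with $r\leq 2$ to $\TUPLE$ with $r\leq d_H$, claiming connector-freeness is what lets the pairwise information be ``bootstrapped'' to control $t_j$-fold codegrees. That bootstrap is in fact Lemma~\ref{lemma:Tuple2forTuplek}, a statement purely about the host hypergraph $G$: a second-moment (Cauchy--Schwarz, Fact~\ref{lemma:CStype}) computation shows that any $(2,C,2,\delta',p)$-pseudorandom $G$ with $p\gg n^{-1/d}$ is already $(2,C,d,\delta,p)$-pseudorandom. No property of $H$, least of all connector-freeness, enters there. Connector-freeness is used in a completely different place: together with linearity it guarantees that for every $h$, the set $\bigcup N_{H_h}(v_h)$ is \emph{stable} in $H_h$ (an edge inside it would be a connector with witness $v_h$). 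This stability is what powers the Extension Lemma bound in Corollary~\ref{cor:POLLemb}: fixing the images of a stable $r(k-1)$-set $W$ costs the full $p^{e(H_{h-1})}$, giving $|\mathcal{E}_{\poll}(H_{h-1},G)|\leq\delta\cdot O(1)\cdot n^{h-1}p^{e(H_{h-1})}$; without stability, $\omega(H_{h-1},W)$ drops below $e(H_{h-1})$ and the bound on polluted embeddings blows up by a factor $p^{-1}$. Stability also feeds the ``clean side'': because $\bigcup N_{H_h}(v_h)$ is stable and $f'$ is an \emph{induced} embedding of $H_{h-1}$, its image is stable in $G$, so cleanness forces the codegree to be correct. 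Your sketch does not isolate the induced/non-induced split at all (the paper handles non-induced embeddings separately in Corollary~\ref{cor:NIemb}), and more importantly it attributes the load-bearing use of connector-freeness to a lemma that never touches $H$. As written, the proposal would leave the polluted-embedding count unbounded precisely in the situation connector-freeness is designed to exclude.
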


This paper is organized as follows. In Section~\ref{sec:AuxiliaryResults} we state an important result, 
Lemma~\ref{lemma:Tuple2forTuplek}, and we give some results that are needed for the proof of 
Lemma~\ref{lemma:Tuple2forTuplek}.
In Section~\ref{sec:ExtensionAndCorollaries} we state the so-called ``Extension Lemma'', an important 
step in the proof of Theorem~\ref{thm:emb}. 
In Section~\ref{sec:proofs}, we prove Lemma~\ref{lemma:Tuple2forTuplek} and Theorem~\ref{thm:emb}. We finish with some concluding 
remarks in Section~\ref{sec:concluding}.

\section{Auxiliary results}\label{sec:AuxiliaryResults}

We begin by generalizing the definitions of $\BDD$ and $\TUPLE$ to deal not only with sets of $k-1$ vertices, but with sets of 
$i$ vertices, for any $1\leq i\leq k-1$.

\begin{property}[General Boundedness Property]
Let $k\geq 2$ and $1\leq i\leq k-1$. We define $\BDD_i(d,C,p)$ as the family of $n$-vertex 
$k$-uniform hypergraphs 
$G=(V,E)$ such that, for all $1\leq r\leq d$ and all families of distinct sets $S_1,\ldots,S_r\in 
{V\choose i}$, we have
 \begin{equation}  |N_G(S_1)\cap\ldots\cap N_G(S_r)|  \leq Cn^{k-i}p^r.
 \end{equation}
Note that $\BDD_{k-1}(d,C,p)$ is the same as $\BDD(d,C,p)$.
\end{property}

\begin{property}[General Tuple Property]
Let $k\geq 2$ and $1\leq i\leq k-1$. We define $\TUPLE_i(d,\delta,p)$ as the family of $n$-vertex 
$k$-uniform hypergraphs 
$G=(V,E)$ such that, for all $1\leq r\leq d$, the following holds.
\begin{equation}\label{gtuple-def}
\left||N_G(S_1)\cap\ldots\cap N_G(S_r)|-{n\choose{k-i}}p^r\right|<\delta {n\choose{k-i}}p^r
\end{equation}
for all but at most $\delta{{n\choose i}\choose r}$ families $\{S_1,\ldots,S_r\}$ of
$r$ distinct sets of ${V\choose i}$. 
We note that $\TUPLE_{k-1}(d,\delta,p)$ is the same as $\TUPLE(d,\delta,p)$.
\end{property}

Let $d\geq 2$ be an integer and let $\delta>0$. Roughly 
speaking, the next result (Lemma~\ref{lemma:Tuple2forTuplek}) states that if $G$ is a 
$(2,C,2,\delta',p)$-pseudorandom $k$-uniform 
hypergraph on $n$ vertices and $p=p(n)\gg n^{-1/d}$, then $G$ is in fact 
$(2,C,d,\delta,p)$-pseudorandom for all
sufficiently large $n$ as long as $\delta'$ is sufficiently small.

\begin{lemma}\label{lemma:Tuple2forTuplek}
For all $\delta>0$, $C>1$ and integers $k$, $d\geq 2$, there exists ${\delta'}>0$ such that the following holds when 
$p=p(n)\gg n^{-1/d}$ and $n$ is sufficiently large:
if $G$ is a $(2,C,2,\delta',p)$-pseudorandom $k$-uniform hypergraph, then $G$ 
is $(2,C,d,\delta,p)$-pseudorandom.
\end{lemma}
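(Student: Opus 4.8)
The only part that needs work is the $\TUPLE(d,\delta,p)$ condition at levels $3\le r\le d$; for $r\in\{1,2\}$ it is immediate from $\TUPLE(2,\delta',p)$ once $\delta'\le\delta$, and $\BDD(2,C,p)$ together with $|E|=p\binom{n}{k}$ are hypotheses. Fix such an $r$, write $\mathcal S=\binom{V}{k-1}$ and $N=|\mathcal S|=\binom{n}{k-1}$, and for $\vec S=(S_1,\dots,S_r)\in\mathcal S^r$ set $X(\vec S)=|N_G(S_1)\cap\dots\cap N_G(S_r)|$. Writing $X(\vec S)=\sum_{v\in V}\prod_{j=1}^r\mathbf 1[S_j\cup\{v\}\in E]$ and interchanging summations gives
\[
\sum_{\vec S\in\mathcal S^r}X(\vec S)=\sum_{v\in V}\deg_G(v)^r
\qquad\text{and}\qquad
\sum_{\vec S\in\mathcal S^r}X(\vec S)^2=\sum_{u,v\in V}\bigl|N_G(u)\cap N_G(v)\bigr|^r,
\]
where $N_G(u)$ denotes the family $\{T\in\mathcal S:T\cup\{u\}\in E\}$ ($=N_G(\{u\})$ in the notation of the introduction). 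The plan is to estimate these two moments, conclude that $\sum_{\vec S}(X(\vec S)-np^r)^2$ is a negligible fraction of $n^2p^{2r}N^r$, and then use Markov's inequality to bound the number of bad ordered tuples and hence, after dividing by $r!$, the number of bad $r$-element families.

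The argument rests on two auxiliary facts. First, from the averaging identity
\[
\bigl|N_G(u)\cap N_G(v)\bigr|=\frac1{k-1}\sum_{T\in\binom{V\setminus\{u,v\}}{k-2}}\bigl|N_G(\{u\}\cup T)\cap N_G(\{v\}\cup T)\bigr|\qquad(u\neq v)
\]
and the analogous $\deg_G(v)=\frac1{k-1}\sum_{T\in\binom{V\setminus\{v\}}{k-2}}|N_G(\{v\}\cup T)|$, combined with $\BDD(2,C,p)$ at $r=1,2$ and the identity $\frac1{k-1}\binom{n-1}{k-2}n=N$, one obtains the uniform bounds $\deg_G(v)\le CpN$ for all $v$ and, crucially, $\bigl|N_G(u)\cap N_G(v)\bigr|\le Cp^2N$ for all $u\neq v$ --- i.e.\ pairs of vertices are bounded at the correct scale $p^2$. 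Second, all but at most $\eta n^2$ ordered pairs $(u,v)$ satisfy $\bigl|N_G(u)\cap N_G(v)\bigr|=(1\pm\eta)p^2N$, where $\eta\to0$ as $\delta'\to0$. This too follows by interchanging summations: $\sum_{u\neq v}|N_G(u)\cap N_G(v)|=\sum_{S\in\mathcal S}|N_G(S)|(|N_G(S)|-1)$ and $\sum_{u\neq v}|N_G(u)\cap N_G(v)|^2=\sum_{S,S'\in\mathcal S}|N_G(S)\cap N_G(S')|(|N_G(S)\cap N_G(S')|-1)$, and these sums are evaluated up to a factor $1\pm o_{\delta'\to0}(1)$ by applying $\TUPLE(2,\delta',p)$ at $r=1,2$ to the typical $(k-1)$-sets and pairs and absorbing the at most $\delta'\binom N2$ exceptional ones via $\BDD(2,C,p)$; a Chebyshev-type step then yields the claim. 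It is here that one uses $p^2N\to\infty$, which is precisely where $p\gg n^{-1/d}$ (and $d\ge2$) is needed.

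Granting these, the main estimate is short. Convexity of $t\mapsto t^r$ and $\sum_v\deg_G(v)=k|E|=kp\binom{n}{k}=(1-o(1))pnN$ give $\sum_{\vec S}X(\vec S)\ge(1-o(1))np^rN^r$, while $\deg_G(v)\le CpN$ gives $\sum_{\vec S}X(\vec S)\le C^rnp^rN^r=o(n^2p^{2r}N^r)$ because $np^r\ge np^d\to\infty$. For the second moment, the diagonal $u=v$ contributes $\sum_v\deg_G(v)^r=o(n^2p^{2r}N^r)$, the at most $n^2$ typical off-diagonal pairs contribute at most $(1+\eta)^rn^2p^{2r}N^r$, and the at most $\eta n^2$ atypical ones contribute at most $\eta C^rn^2p^{2r}N^r$ by the scale-$p^2$ bound; hence $\sum_{\vec S}X(\vec S)^2\le\bigl(1+o_{\delta'\to0}(1)+o_{n\to\infty}(1)\bigr)n^2p^{2r}N^r$. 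Expanding the square,
\[
\sum_{\vec S\in\mathcal S^r}\bigl(X(\vec S)-np^r\bigr)^2=\sum_{\vec S}X(\vec S)^2-2np^r\sum_{\vec S}X(\vec S)+N^r(np^r)^2\le\nu\,n^2p^{2r}N^r,
\]
with $\nu$ arbitrarily small once $\delta'$ is small and $n$ large. Markov's inequality bounds the number of $\vec S$ with $|X(\vec S)-np^r|\ge\delta np^r$ by $\nu N^r/\delta^2$; since each bad $r$-family contributes $r!$ such ordered tuples and $r!\binom{N}{r}\ge N^r/2$, at most $(2\nu/\delta^2)\binom{N}{r}$ families are bad, which is $\le\delta\binom{N}{r}$ as soon as $\nu\le\delta^3/2$. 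Choosing $\delta'$ small enough in terms of $\delta,C,k,d$ to make all the $o_{\delta'\to0}(1)$ terms sufficiently small completes the proof.

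The step I expect to be the genuine obstacle --- and the reason a naive second moment fails for $p=o(1)$ --- is controlling the contribution of the few ``dense'' vertex pairs to $\sum_{\vec S}X(\vec S)^2$. The trivial bound $|N_G(u)\cap N_G(v)|\le\min(\deg_G(u),\deg_G(v))\le CpN$ would make the $\eta n^2$ atypical pairs contribute $\eta C^rn^2p^rN^r$, which exceeds the main term $n^2p^{2r}N^r$ by a factor of order $p^{-r}\to\infty$; it is exactly the inequality $|N_G(u)\cap N_G(v)|\le Cp^2N$, extracted from $\BDD(2,C,p)$ at $r=2$ through the averaging identity above, that saves the estimate. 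Proving that bound, and the matching typical value $(1\pm\eta)p^2N$ from $\TUPLE(2,\delta',p)$ at $r\le2$, is the technical core; everything else is bookkeeping.
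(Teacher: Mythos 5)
Your proposal is correct and follows essentially the same route as the paper's proof: the paper first passes from $\TUPLE$ and $\BDD$ on $(k-1)$-sets to the vertex-level statements $\TUPLE_1$ and $\BDD_1$ (Lemmas~\ref{lemma:TUPLE-k-1--TUPLE-1} and~\ref{lemma:degree-upper}, which you rederive inline via the same interchange of summations and the codegree bound $|N(u)\cap N(v)|\le Cp^2\binom{n}{k-1}$), and then checks the two moment inequalities of Fact~\ref{lemma:CStype}, which is exactly your Chebyshev step. The only deviations --- ordered tuples and $r$-th powers in place of unordered families and binomial coefficients, and convexity rather than $\TUPLE_1$ for the first-moment lower bound --- are cosmetic.
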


Since we have $n^{-1/D_H}\geq n^{-1/d_H}$ for any $k$-graph $H$, Lemma~\ref{lemma:Tuple2forTuplek} tells us that it 
suffices to consider
$(D_H,C,d_H,\delta,p)$-pseudorandom hypergraphs $G$ in the proof of Theorem~\ref{thm:emb}.

In the remainder of this section we prove some results that are important in the proof of 
Lemma~\ref{lemma:Tuple2forTuplek}. We start with some simple combinatorial facts and in 
Section~\ref{subsec:LmtCg} we, roughly speaking, show how to obtain properties $\BDD_i$ for every $1\leq i\leq k-1$ and 
$\TUPLE_1$ from our pseudorandomness assumption.
The proof of the following well-known lemma can be seen in~\cite{KoRoSi04}.

\begin{fact}\label{lemma:CStype}
 For every $\delta>0$, there exists $\gamma>0$ such that, if a family of real numbers $a_i\geq 0$, for
$1\leq i\leq N$, satisfies the following inequalities:
 \begin{enumerate}[label=\rmlabel]
  \item\label{it:csi} $\sum_{i=1}^N a_i \geq (1-\gamma)Na$,
  \item\label{it:csii} $\sum_{i=1}^N {a_i}^2 \leq (1+\gamma)Na^2$,
 \end{enumerate}
then
\begin{equation*}
\big|\{i\colon |a_i-a|<\delta a\}\big| > (1-\delta)N.
\end{equation*}
\end{fact}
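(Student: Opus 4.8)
The plan is to run the standard second-moment (Chebyshev/Cauchy--Schwarz-type) argument. Throughout I assume $a>0$; the case $a=0$ is degenerate, since then \ref{it:csii} already forces every $a_i=0$ and the asserted conclusion becomes vacuous, so it must be understood that $a$ is a positive normalizing constant. The first step is to expand the sum of squared deviations:
\[
\sum_{i=1}^N (a_i-a)^2 = \sum_{i=1}^N a_i^2 - 2a\sum_{i=1}^N a_i + Na^2 .
\]
Now I would substitute the two hypotheses. By \ref{it:csii} the first term is at most $(1+\gamma)Na^2$; by \ref{it:csi}, and since $a>0$, the middle term $-2a\sum_i a_i$ is at most $-2a(1-\gamma)Na = -2(1-\gamma)Na^2$. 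Adding the final $Na^2$ yields
\[
\sum_{i=1}^N (a_i-a)^2 \le \bigl((1+\gamma) - 2(1-\gamma) + 1\bigr)Na^2 = 3\gamma Na^2 .
\]

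The second step is to localize the ``bad'' indices. Put $B = \{\, i : |a_i-a| \ge \delta a \,\}$. Every $i\in B$ contributes at least $\delta^2 a^2$ to the left-hand side above, so $|B|\,\delta^2 a^2 \le 3\gamma Na^2$, i.e.\ $|B| \le (3\gamma/\delta^2)\,N$. Hence it suffices to choose $\gamma$ small enough that $3\gamma/\delta^2 < \delta$, for instance $\gamma := \delta^3/4$; then $|B| < \delta N$ and therefore $\bigl|\{\, i : |a_i-a| < \delta a \,\}\bigr| = N - |B| > (1-\delta)N$, which is the claim.

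I do not expect any real obstacle here: the whole proof is the displayed identity followed by a one-line Markov estimate, and the choice of $\gamma$ in terms of $\delta$ is explicit. The only two points requiring a moment's care are (a) checking that the two \emph{one-sided} assumptions combine correctly into a two-sided $L^2$ bound — this works precisely because \ref{it:csi} is a lower bound while \ref{it:csii} is an upper bound, so both substitutions push in the right direction — and (b) the implicit hypothesis $a>0$ noted above.
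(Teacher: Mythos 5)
Your proof is correct. The paper itself does not prove this fact but defers to the cited reference \cite{KoRoSi04}; the argument there is the same second-moment (Chebyshev-type) estimate you give, so your proposal matches the intended proof, and your remark that $a$ must be treated as a positive normalizing constant is an accurate observation about the statement's implicit hypothesis.
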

  
Let $r\geq 1$ and $a>0$. Since ${na\choose r}/a^r{n\choose r}\to 1$ when $n\to\infty$, we 
obtain the following lemma, which provides a combinatorial inequality that will be used often. 

\begin{fact}\label{lemma:comb}
Let $\sigma>0$, $r\geq 1$ and $a>0$. Then, the following holds for a sufficiently large~$n$.
\begin{equation*}
 \left|{na\choose r} - a^r{n\choose r}\right| \leq \sigma a^r{n\choose r}.
\end{equation*}
\end{fact}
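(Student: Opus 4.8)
The plan is exactly the one the remark preceding the statement alludes to: compare $\binom{na}{r}$ and $a^r\binom nr$ as polynomials in $n$ and check that their ratio tends to $1$. Throughout, $\binom{na}{r}$ is to be read through the falling-factorial definition $\binom xr=x(x-1)\cdots(x-r+1)/r!$, which makes sense for the real argument $x=na$ even when $na\notin\mathbb{Z}$.

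First I would expand both quantities. Pulling a factor $a$ out of each term of the falling factorial of $na$,
\[
\binom{na}{r}=\frac{na(na-1)\cdots(na-r+1)}{r!}
=\frac{a^r\, n\,(n-\tfrac1a)\,(n-\tfrac2a)\cdots(n-\tfrac{r-1}{a})}{r!},
\]
whereas $\binom nr=n(n-1)\cdots(n-r+1)/r!$. Since $\binom nr>0$ for every $n\ge r$, dividing is legitimate and the common factors $n$ and $r!$ cancel, leaving
\[
\frac{\binom{na}{r}}{a^r\binom nr}=\prod_{j=1}^{r-1}\frac{n-j/a}{n-j}.
\]

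This is a product of only $r-1$ factors, each a quotient of two monic linear polynomials in $n$, so each factor tends to $1$ as $n\to\infty$ and hence so does the whole product. Consequently, given $\sigma>0$ there is $n_0=n_0(\sigma,r,a)$ with $\bigl|\binom{na}{r}/(a^r\binom nr)-1\bigr|\le\sigma$ for all $n\ge n_0$; multiplying by $a^r\binom nr>0$ gives the asserted bound. There is really no obstacle here: the only points to keep in mind are that $na$ need not be an integer (so one argues with the polynomial, not combinatorially) and that one needs $n\ge r$ in order to divide by $a^r\binom nr$. When $r=1$ the product above is empty and in fact $\binom{na}{1}=a\binom n1$ identically, so the statement is trivial in that case.
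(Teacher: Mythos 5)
Your proof is correct and follows exactly the approach the paper itself alludes to: the paper offers no separate proof, merely remarking that ${na\choose r}/a^r{n\choose r}\to 1$ as $n\to\infty$, and your factorization $\binom{na}{r}/\bigl(a^r\binom{n}{r}\bigr)=\prod_{j=1}^{r-1}\frac{n-j/a}{n-j}$ is simply the explicit verification of that limit, together with the appropriate (and correct) caveat about reading $\binom{na}{r}$ via the falling factorial when $na$ is not an integer.
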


\subsection{Extending properties \texorpdfstring{$\BDD$}{BDD} and \texorpdfstring{$\TUPLE$}{TUPLE}}\label{subsec:LmtCg}
In this section we prove two results, 
Lemmas~\ref{lemma:degree-upper}~and~\ref{lemma:TUPLE-k-1--TUPLE-1}, that give conditions for a 
hypergraph $G$ to satisfy properties $\BDD_i$ for every $1\leq i\leq k-1$, and $\TUPLE_1$.

\begin{lemma}\label{lemma:degree-upper}
Let $C>1$ be an integer, let $G$ be an $n$ vertex $k$-uniform hypergraph and consider $0<p=p(n)\leq 1$. If $G$ 
satisfies $\BDD(2,C,p)$, then $G$ satisfies $\BDD_i(2,C,p)$ for all $1\leq 
i\leq k-1$.
\end{lemma}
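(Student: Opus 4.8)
The plan is to prove all the cases $1\le i\le k-1$ at once, by a single double-counting argument that reduces a bound on an intersection of neighbourhoods of $i$-sets to the same bound for $(k-1)$-sets, which is precisely the hypothesis $\BDD(2,C,p)=\BDD_{k-1}(2,C,p)$. (For $i=k-1$ there is of course nothing to prove, and the argument below degenerates to a triviality in that case.) The underlying idea is that an edge containing an $i$-set $S$ can be obtained by first growing $S$ to a $(k-1)$-set and then adding one more vertex, and one simply has to bookkeep the multiplicity introduced by this two-step procedure.

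Fix $i$ with $1\le i\le k-1$ and $r\in\{1,2\}$, and let $S_1,\ldots,S_r\in\binom{V}{i}$ be distinct. Consider the set $\mathcal{P}$ of pairs $(W,t)$ such that $W\in\binom{V}{k-1-i}$ is disjoint from $S_1\cup\cdots\cup S_r$, and $t\in V$ satisfies $t\in N_G(S_j\cup W)$ for every $1\le j\le r$; note the last condition forces $t\notin S_j\cup W$, so in particular $t\notin W$. I would count $\mathcal{P}$ in two ways. First, for each admissible $W$ the sets $S_1\cup W,\ldots,S_r\cup W\in\binom{V}{k-1}$ are pairwise distinct (the $S_j$ are distinct and each is disjoint from $W$), so $\BDD_{k-1}(2,C,p)$ yields $|N_G(S_1\cup W)\cap\cdots\cap N_G(S_r\cup W)|\le Cnp^r$; since there are at most $\binom{n}{k-1-i}\le n^{k-1-i}$ choices of $W$, this gives $|\mathcal{P}|\le Cn^{k-i}p^r$. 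Second, the map $(W,t)\mapsto W\cup\{t\}$ sends $\mathcal{P}$ onto $N_G(S_1)\cap\cdots\cap N_G(S_r)$: indeed a $(k-i)$-set $T$ lies in this intersection exactly when it is disjoint from every $S_j$ and $S_j\cup T\in E$ for all $j$, and then for each $t\in T$ the pair $(T\setminus\{t\},t)$ is a preimage; conversely every such pair maps to a $T$ in the intersection. Every fibre of this map has size exactly $k-i$, so $(k-i)\,|N_G(S_1)\cap\cdots\cap N_G(S_r)|=|\mathcal{P}|\le Cn^{k-i}p^r$, and as $k-i\ge1$ we conclude $|N_G(S_1)\cap\cdots\cap N_G(S_r)|\le Cn^{k-i}p^r$, which is $\BDD_i(2,C,p)$.

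This is essentially routine, so there is no serious obstacle; the only points deserving attention are verifying that the translated sets $S_j\cup W$ remain distinct (so that the $r=2$ instance of the hypothesis is applicable) and correctly accounting for the multiplicity $k-i$ of the counting map, together with the harmless boundary case $k-1-i=0$, where $W=\emptyset$ is the only choice.
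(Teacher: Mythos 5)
Your proof is correct and follows essentially the same approach as the paper: the paper also obtains $\BDD_i$ from $\BDD_{k-1}$ by an averaging/double-counting argument, the only cosmetic difference being that it is organized as an induction on $i=k-1,\ldots,1$ (adding one vertex at a time), whereas you perform the full reduction from $i$-sets to $(k-1)$-sets in a single step. The two points that need care — that the translated sets $S_j\cup W$ remain distinct, and that the fibres of $(W,t)\mapsto W\cup\{t\}$ have size exactly $k-i$ — are both handled correctly.
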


\begin{proof}
The proof follows by induction on $i=k-1,\ldots,1$ and a simple averaging argument.
\end{proof}

The next result gives necessary conditions for a $k$-uniform hypergraph to satisfy 
property $\TUPLE_1(2,\delta,p)$.

\begin{lemma}\label{lemma:TUPLE-k-1--TUPLE-1}
For all $C>1$, ${\delta}>0$ and an integer $k\geq 2$, there exists $\sigma>0$ such that the 
following holds for $p\gg n^{-1/2}$ and sufficiently large $n$.

If $G$ is a $(2,C,2,\sigma,p)$-pseudorandom $n$-vertex $k$-uniform hypergraph, then $G$ satisfies 
$\TUPLE_1(2,\delta,p)$.
\end{lemma}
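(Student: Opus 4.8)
The plan is to deduce $\TUPLE_1(2,\delta,p)$ from the pseudorandomness hypotheses by a second-moment argument via Fact~\ref{lemma:CStype}, applied to the family of codegrees $a_v=|N_G(\{v\})|$ (for the case $r=1$) and $a_{\{u,v\}}=|N_G(\{u\})\cap N_G(\{v\})|$ (for the case $r=2$), both of which are sequences of nonnegative reals indexed by $V$ respectively $\binom{V}{2}$. First I would handle $r=1$: I want to check hypotheses \ref{it:csi} and \ref{it:csii} of Fact~\ref{lemma:CStype} with $a=\binom{n}{k-1}p$ and $N=n$. The lower bound $\sum_v a_v\geq(1-\gamma)na$ follows because $\sum_v|N_G(\{v\})|=\binom{k}{1}|E|=k p\binom{n}{k}$, which equals (up to a $1+o(1)$ factor, absorbed using Fact~\ref{lemma:comb}) $n\binom{n}{k-1}p$; this only uses $|E|=p\binom{n}{k}$. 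For the upper bound on $\sum_v a_v^2$, note that $\sum_v a_v^2=\sum_v|N_G(\{v\})|^2=\sum_v\sum_{S_1,S_2\in N_G(\{v\})}1$, which counts ordered pairs $(S_1,S_2)$ of sets in $\binom{V}{k-1}$ together with a common neighbour $v$; splitting into $S_1=S_2$ and $S_1\neq S_2$, this is $\sum_{S}|N_G(S)|+\sum_{S_1\neq S_2}|N_G(S_1)\cap N_G(S_2)|$. The diagonal term is $O(n\cdot Cnp)=O(n^2p)$ by $\BDD(2,C,p)$ and is negligible compared to $na^2\sim n\binom{n}{k-1}^2p^2$ as long as $p\gg n^{-1}$ (guaranteed since $p\gg n^{-1/2}$). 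For the off-diagonal term, I would use $\TUPLE(2,\sigma,p)$: all but at most $\sigma\binom{\binom{n}{k-1}}{2}$ pairs $\{S_1,S_2\}$ satisfy $|N_G(S_1)\cap N_G(S_2)|<(1+\sigma)np^2$, contributing at most $(1+\sigma)\binom{n}{k-1}^2 np^2$ to the sum over ordered pairs; the remaining $\le\sigma\binom{n}{k-1}^2$ pairs each contribute at most $|N_G(S_1)|\le Cnp$ by $\BDD(1,C,p)$ (a special case of $\BDD(2,C,p)$), giving at most $\sigma C\binom{n}{k-1}^2 np$. Choosing $\sigma$ small and using $p\gg n^{-1/2}$ (so that $np\to\infty$, hence $\sigma C np=o(\sigma n p^2\cdot np)$— wait, more carefully: $\sigma C np$ versus $np^2\binom{n}{k-1}^2$: we need $\sigma C\le\gamma/2 \cdot p$, i.e.\ $\sigma$ can be taken depending on the later-revealed $p$? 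No—$\sigma$ must be chosen first) one sees this is where $p\gg n^{-1/2}$ enters: since $\sigma C np\le \sigma C\cdot n p^2\cdot (1/p)$ and $1/p\ll n^{1/2}$ while we are dividing by $n^2 p^2$-sized quantities, the bound $\sigma C n p=o(na^2)$ holds once $n$ is large, for any fixed $\sigma$, precisely because $np^2\cdot n\gg n\cdot (n^{1/2})$... I would arrange the inequalities so that choosing $\gamma=\gamma(\delta)$ from Fact~\ref{lemma:CStype} and then $\sigma\le\sigma(\gamma,C,k)$ small enough makes \ref{it:csii} hold for all large $n$; Fact~\ref{lemma:comb} is used to replace $\binom{na}{\cdot}$-type expressions and to pass between $\binom{n}{k-1}$ and $n^{k-1}/(k-1)!$.

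Having verified \ref{it:csi}--\ref{it:csii} for $r=1$, Fact~\ref{lemma:CStype} yields $|\{v:|a_v-a|<\delta a\}|>(1-\delta)n$, which is exactly the statement of $\TUPLE_1(2,\delta,p)$ restricted to $r=1$ (note $\binom{n}{1}=n$ and the exceptional set has size $<\delta n=\delta\binom{n}{1}$). Next I would treat $r=2$, where we must control $b_{\{u,v\}}:=|N_G(\{u\})\cap N_G(\{v\})|$ over the $N=\binom{n}{2}$ pairs, aiming for concentration around $a=\binom{n}{k-1}p^2$. Again I apply Fact~\ref{lemma:CStype}. The key sums are $\sum_{\{u,v\}}b_{\{u,v\}}$ and $\sum_{\{u,v\}}b_{\{u,v\}}^2$. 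The first counts (unordered pairs $\{u,v\}$, common $(k-1)$-neighbour $S$) with $u,v\notin S$ and $S\cup\{u\},S\cup\{v\}\in E$; equivalently, for each $S\in\binom{V}{k-1}$, the number of pairs $\{u,v\}\subset N_G(S)$, which is $\binom{|N_G(S)|}{2}$ if we think of $N_G(S)$ as a vertex-set— but careful, here $N_G(S)$ for $|S|=k-1$ is a set of vertices (singletons), so indeed $\sum_{\{u,v\}}b_{\{u,v\}}=\sum_{S\in\binom{V}{k-1}}\binom{|N_G(S)|}{2}$. By $\TUPLE(1,\sigma,p)$ most $S$ have $|N_G(S)|=(1\pm\sigma)np$, giving $\sum_S\binom{|N_G(S)|}{2}\approx\binom{n}{k-1}\binom{np}{2}\approx\binom{n}{k-1}n^2p^2/2$, and this matches $Na=\binom{n}{2}\binom{n}{k-1}p^2\approx n^2\binom{n}{k-1}p^2/2$ up to $1+o(1)$; the exceptional $S$ (at most $\sigma\binom{n}{k-1}$ of them) contribute at most $\sigma\binom{n}{k-1}\binom{Cnp}{2}$ by $\BDD(1,C,p)$, which is $O(\sigma)$ times the main term, absorbable. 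For $\sum_{\{u,v\}}b_{\{u,v\}}^2$ I would expand: this counts ordered pairs $(S_1,S_2)$ of $(k-1)$-sets together with a pair $\{u,v\}$ lying in both $N_G(S_1)$ and $N_G(S_2)$; splitting by whether $S_1=S_2$ or not gives a diagonal term $\sum_S\binom{|N_G(S)|}{2}$ (the same as before, hence negligible against $Na^2$ provided $p$ is not too small) plus $\sum_{S_1\neq S_2}\binom{|N_G(S_1)\cap N_G(S_2)|}{2}$; by $\TUPLE(2,\sigma,p)$ most pairs $\{S_1,S_2\}$ have $|N_G(S_1)\cap N_G(S_2)|=(1\pm\sigma)np^2$, contributing $\approx\binom{n}{k-1}^2\binom{np^2}{2}\approx\binom{n}{k-1}^2 n^2p^4/2$, which matches $Na^2=\binom{n}{2}\binom{n}{k-1}^2p^4\approx n^2\binom{n}{k-1}^2p^4/2$; exceptional pairs contribute at most $\sigma\binom{n}{k-1}^2\binom{Cnp}{2}$ by $\BDD(1,C,p)$, which is $O(\sigma/p^2)$ times $\binom{n}{k-1}^2 n^2 p^4$—and here is precisely where $p\gg n^{-1/2}$ is essential: we need $\sigma/p^2=o(1)$ relative to the $\gamma$-budget, i.e.\ $\sigma\ll\gamma p^2$; since $p^2\gg n^{-1}$, for fixed $\sigma$ this term is $o(n^2\binom{n}{k-1}^2p^4)$ once $n$ is large only if... no: $\sigma\binom{n}{k-1}^2\binom{Cnp}{2}\approx \sigma C^2\binom{n}{k-1}^2 n^2 p^2/2$, and dividing by $Na^2\approx\binom{n}{k-1}^2 n^2 p^4/2$ gives $\sigma C^2/p^2$, which is NOT $o(1)$ for fixed $\sigma$ unless $p$ is bounded below. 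So in fact one must be more careful: replace the crude bound on the bad pairs. The fix is to bound the contribution of a bad pair not by $\binom{Cnp}{2}$ but to observe $\binom{|N_G(S_1)\cap N_G(S_2)|}{2}\le|N_G(S_1)\cap N_G(S_2)|^2/2\le (Cnp^2)^2/2$ by $\BDD(2,C,p)$ directly on $N_G(S_1)\cap N_G(S_2)$ — this gives $\sigma\binom{n}{k-1}^2C^2n^2p^4/2$, i.e.\ $\sigma C^2$ times the main term, which IS absorbable for small fixed $\sigma$. (This is why the hypothesis is $\BDD(2,C,p)$ and not merely $\BDD(1,C,p)$.)

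With both sums controlled, Fact~\ref{lemma:CStype} (with $\gamma=\gamma(\delta)$ then $\sigma$ chosen small in terms of $\gamma$, $C$, $k$) gives $|\{\{u,v\}:|b_{\{u,v\}}-a|<\delta a\}|>(1-\delta)\binom{n}{2}$, i.e.\ at most $\delta\binom{\binom{n}{2}}{1}$... but we need the exceptional set to have size at most $\delta\binom{\binom{n}{2}}{2}$ for the $r=2$ clause of $\TUPLE_1$ — wait, no: re-reading the definition, for parameter $r$ the allowed exceptional set is $\delta\binom{\binom{n}{i}}{r}$ among families of $r$ distinct sets; here $i=1$ so sets are singletons, i.e.\ vertices, and for $r=2$ the families are pairs of vertices, of which there are $\binom{n}{2}$, and the bound is $\delta\binom{\binom{n}{1}}{2}=\delta\binom{n}{2}$. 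Good — so $(1-\delta)\binom{n}{2}$ good pairs is exactly what is needed. Thus both $r=1$ and $r=2$ cases of $\TUPLE_1(2,\delta,p)$ follow, completing the proof. The main obstacle, as the discussion above shows, is the bookkeeping in the second-moment computation for $r=2$: one has to be careful to bound the contribution of the ``bad'' pairs of $(k-1)$-sets using the \emph{quadratic} boundedness $\BDD(2,C,p)$ applied to the intersection $N_G(S_1)\cap N_G(S_2)$ (so that the error scales like $\sigma\cdot(\text{main term})$ rather than $\sigma/p^2\cdot(\text{main term})$), and to verify that the diagonal terms $\sum_S\binom{|N_G(S)|}{2}$ are negligible against the main terms — this is where $p\gg n^{-1/2}$ is used. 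I would also take care, throughout, to convert between $\binom{n}{k-1}$, $\binom{n}{2}$, $\binom{np}{r}$ and their leading-order expressions using Fact~\ref{lemma:comb}, and to fix the quantifier order as: given $\delta$, obtain $\gamma$ from Fact~\ref{lemma:CStype} (used for both $r=1,2$, taking the smaller $\gamma$), then choose $\sigma=\sigma(\gamma,C,k)$ small enough, then let $n$ be large.
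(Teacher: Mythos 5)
Your overall route is exactly the paper's: apply Fact~\ref{lemma:CStype} to the (co)degree sequences $|N_G(\{v\})|$ and $|N_G(\{u\})\cap N_G(\{v\})|$, convert the first and second moments into sums over $(k-1)$-sets $S$, $\{S_1,S_2\}$, use $\TUPLE(2,\sigma,p)$ for the ``typical'' terms, use $\BDD(2,C,p)$ for the diagonal and for the $\leq\sigma\binom{\binom{n}{k-1}}{2}$ exceptional terms, pass between $\binom{\cdot}{\cdot}$ expressions with Fact~\ref{lemma:comb}, and let $p\gg n^{-1/2}$ kill the diagonal. The key observation you eventually land on for $r=2$ --- that the bad pairs must be bounded by $\binom{Cnp^2}{2}$ using $\BDD(2,C,p)$ applied to the intersection $N_G(S_1)\cap N_G(S_2)$, rather than by $\binom{Cnp}{2}$ --- is precisely the bound \eqref{eq:part2-3} in the paper; you are right that this is why the hypothesis is $\BDD(2,C,p)$ and not just $\BDD(1,C,p)$.

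However, in your first paragraph (the $r=1$ case) the analogous step is left in an incorrect state. There you bound each of the $\leq\sigma\binom{n}{k-1}^2$ bad ordered pairs by $|N_G(S_1)|\leq Cnp$, giving an error term $\sigma C\binom{n}{k-1}^2np$, and then try to argue this is absorbed. It is not: the target is $\gamma Na^2=\gamma n\binom{n}{k-1}^2p^2$, so the ratio is $\sigma C/(\gamma p)\to\infty$, and the sentence ending ``precisely because $np^2\cdot n\gg n\cdot(n^{1/2})$'' does not rescue it --- no choice of a fixed $\sigma$ (which must be picked before $p$) makes $\sigma C/p$ small. The fix is the one you supply yourself in the $r=2$ paragraph: for $S_1\neq S_2$ use $\BDD(2,C,p)$ to get $|N_G(S_1)\cap N_G(S_2)|\leq Cnp^2$, so the bad-pair contribution is $\sigma C\binom{n}{k-1}^2np^2$, i.e.\ $\sigma C$ times the main term, which is absorbable. (Also a small slip: your diagonal term $\sum_S|N_G(S)|$ is $O(n^kp)$, not $O(n^2p)$ --- there are $\binom{n}{k-1}$ sets $S$, not $n$ of them --- though the conclusion that it is negligible against $na^2\sim n^{2k-1}p^2$ when $p\gg n^{-1/2}$ is still correct.) With these two corrections your $r=1$ argument closes, and the rest is the paper's proof.
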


\begin{proof}
We must prove that \eqref{gtuple-def} holds for $1\leq r\leq 2$. 
Since the proofs of the cases $r=1$ and $r=2$ are similar, we present only the proof for the 
case $r=2$. 
We will show that the two inequalities required to apply Fact~\ref{lemma:CStype} hold.

Fix $C>1$, ${\delta}>0$ and an integer $k\geq 2$.
Let $\gamma>0$ be obtained by an application of Fact~\ref{lemma:CStype} with parameter 
${\delta}>0$ and let $\sigma=\sigma(C,\gamma)$ be a sufficiently small constant.
Now let $p\gg n^{-1/2}$ and consider a sufficiently large $n$. 
Suppose that $G=(V,E)$ is a $(2,C,2,\sigma,p)$-pseudorandom $n$-vertex $k$-uniform hypergraph. Thus,

\begin{align}\label{eq:inequality-part2-i}
\sum_{\{u,v\}\in {V\choose 2}} |N(u)\cap N(v)| &= \sum_{S\in {V\choose k-1}}{|N(S)|\choose 
2}\nonumber\\
&\geq \sum_{S\in {V\choose k-1}\colon |N(S)|\geq (1-{\sigma})np}{|N(S)|\choose 2}\nonumber\\
&\geq (1-{\sigma}){n\choose k-1}{(1-{\sigma})pn \choose 2}\nonumber\\
&\geq (1-{\sigma})^4{n\choose 2} {n\choose k-1}p^2\nonumber\\
&\geq (1-\gamma){n\choose 2} {n\choose k-1}p^2,
\end{align}
where the first inequality is trivial and the second follows from $\TUPLE(2,\sigma,p)$ and in the 
third inequality we 
apply Fact~\ref{lemma:comb}.

Heading for an application of Fact~\ref{lemma:CStype} we consider the following sum.
\begin{align}\label{eq:paraFinalizar2}
\sum_{\{u,v\}\in {V\choose 2}} |N(u)\cap N(v)|^2 &= \sum_{(S_1,S_2)\in {V\choose 
k-1}^2}{|N(S_1)\cap 
N(S_2)| \choose 2}\nonumber\\
&=\sum_{\substack{(S_1,S_2)\in {V\choose k-1}^2\\S_1\neq S_2}}{|N(S_1)\cap N(S_2)| \choose 2} + 
\sum_{S_1\in 
{V\choose k-1}}{|N(S_1)| \choose 
2}.
\end{align}
We will bound the two sums in~\eqref{eq:paraFinalizar2}. 
By~$\BDD(2,C,p)$, the choice of $p$ and an application of Fact~\ref{lemma:comb}, we have
\begin{align*}
\sum_{S_1\in {V\choose k-1}}{|N(S_1)| \choose 2} &\leq {n\choose k-1}{Cnp\choose 2}\nonumber\\ 
&\leq (1+{\sigma})C^2{n\choose 2}{n\choose k-1}p^2.
\end{align*}
Since $p\gg n^{-1/2}$, we obtain
\begin{align}\label{eq:part2-1}
\sum_{S_1\in {V\choose k-1}}{|N(S_1)| \choose 2}\leq{\sigma}{n\choose 2}\left({n\choose k-1}p^2\right)^2.
\end{align}

To bound the remaining sum, define $A$ and $B$ as the families of pairs $\{S_1,S_2\}$ with 
$S_1,S_2\in {V\choose k-1}$ and $S_1\neq S_2$ such that $|N(S_1)\cap N(S_2)|\leq 
(1+{\sigma})np^2$ for all pairs in $A$, and 
$|N(S_1)\cap N(S_2)|> (1+{\sigma})np^2$ for all pairs in $B$.
By Fact~\ref{lemma:comb} we obtain
\begin{align}\label{eq:part2-2}
\sum_{\{S_1,S_2\}\in A}{|N(S_1)\cap N(S_2)| \choose 2}&\leq \frac{1}{2}{n\choose 
k-1}^2{(1+{\sigma})p^2{n}\choose 2}\nonumber\\
&\leq \frac{(1+{\sigma})^3}{2}{n\choose 2}\left({n\choose k-1}p^2\right)^2.
\end{align}
By $\BDD(2,C,p)$, $\TUPLE(2,\sigma,p)$ and Fact~\ref{lemma:comb} applied with
$\sigma$, $r=2$ and $a=Cp^2$, we have
\begin{align}\label{eq:part2-3}
 \sum_{\{S_1,S_2\}\in B}{|N(S_1)\cap N(S_2)| \choose 2} &\leq \frac{{\sigma}}{2} {n\choose k-1}^2 
{Cp^2 n\choose 2}\nonumber\\
&\leq \frac{{\sigma}(1+{\sigma}) C^2}{2}{n\choose 2}\left({n\choose k-1}p^2\right)^2.
\end{align}
Replacing \eqref{eq:part2-1}, \eqref{eq:part2-2} and \eqref{eq:part2-3} in 
\eqref{eq:paraFinalizar2}, we have
\begin{align}\label{eq:inequality-part2-ii}
\sum_{\{u,v\}\in {V\choose 2}} |N(u)\cap N(v)|^2 &\leq \left({\sigma} + (1+{\sigma})^{3} + 
{\sigma}(1+{\sigma})C^2\right){n\choose 
2}\left({n\choose 
k-1}p^2\right)^2\nonumber\\
&\leq (1+\gamma){n\choose 2}\left({n\choose k-1}p^2\right)^2.
\end{align}

Equations \eqref{eq:inequality-part2-i} and \eqref{eq:inequality-part2-ii} can be seen as  
inequalities~\ref{it:csi} and~\ref{it:csii} in Fact~\ref{lemma:CStype}. 
Therefore, we conclude that, for at least $(1-{\delta}){n\choose 2}$ pairs of vertices $\{u,v\} 
\in 
{V\choose 2}$, we have
\begin{equation*}
\left||N(u)\cap N(v)|-{n\choose k-1}p^2\right|<{\delta} {n\choose k-1}p^2.
\end{equation*}
\end{proof}

\section{Extension Lemma and corollaries}\label{sec:ExtensionAndCorollaries}

In this section we prove a result called \emph{Extension Lemma} (Lemma~\ref{lemma:ExtensionLemma}) 
from where we derive 
Corollaries~\ref{cor:NIemb}~and~\ref{cor:POLLemb}, which are used in the proof of 
Theorem~\ref{thm:emb}.

\subsection{Extension Lemma}

Before starting the discussion concerning the Extension Lemma we shall define some concepts. 
Consider $k$-uniform hypergraphs $G$ and $H$. Given sequences $W=w_1,\ldots,w_\ell\in 
{V(H)}^\ell$ and $X=x_1,\ldots,x_\ell\in {V(G)}^\ell$, 
define $\mathcal{E}(H,G,W,X)$ as the set of embeddings $f\in \mathcal{E}(H,G)$ such that 
$f(w_i)=x_i$ for all $1\leq i\leq \ell$. 
Furthermore, for a sequence $Y$ define the set of its elements by $Y^{\set}=\{y_1,\ldots,y_\ell\}$. We say that a subset of vertices $V'\subset 
V(H)$ is \emph{stable} if 
$E(H[V'])=\emptyset$, i.e., if there is no edge of $H$ contained in~$V'$.

Let $H$ be a hypergraph with $m$ vertices. 
We say that $H$ is $d$\emph{-degenerate} if there exists an ordering $v_1,\ldots,v_m$ of $V(H)$ 
such that 
$d_{H_i}(v_i)\leq d$ for all $1\leq i\leq m$, where $H_i=H[\{v_1,\ldots,v_i\}]$. In this case, 
we say that $v_1,\ldots,v_m$ is a 
\emph{$d$-degenerate ordering} of the vertices of $H$. Given a sequence $W\in V(H)^{\ell}$, we 
define 
$\omega(H,W)=|{E(H)}| 
- |E(H[W^{\set}])|$, i.e., $\omega(H,W)$ is the number of edges of $H$ that are not contained in 
$W^{\set}$.

\begin{lemma}[Extension Lemma]\label{lemma:ExtensionLemma}
Let $C\geq 1$, $m\geq 1$ and $k\geq 2$. Let $G$ and $H$ be $k$-uniform hypergraphs such that $H$ is 
linear, $|V(H)|=m$, 
$|V(G)|=n$ and $p=p(n)=e(G)/{n\choose k}$. Suppose that $0\leq 
\ell\leq \max\{k,d_H\}$, and let $W\in {V(H)}^\ell$ and $X\in {V(G)}^\ell$ be fixed. If $G\in 
\BDD(D_H,C,p)$, then
\begin{equation*}
|\mathcal{E}(H,G,W,X)| \leq C^{m-\ell}n^{m-\ell}p^{\omega(H,W)}.
\end{equation*}
In particular, if $W^{\set}\subset V(H)$ is stable, then $|\mathcal{E}(H,G,W,X)| \leq 
C^{m-\ell}n^{m-\ell}p^{e(H)}$.
\end{lemma}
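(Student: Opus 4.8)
The plan is to extend an embedding ``one vertex at a time'' and bound the number of choices at each step using property $\BDD$, together with Lemma~\ref{lemma:degree-upper} to access $\BDD_i$ for intermediate set sizes. First I would fix a $d_H$-degenerate ordering $v_1,\dots,v_m$ of $V(H)$ (recall $d_H$ is the degeneracy of $H$, so such an ordering exists). After a harmless relabelling I may assume the prescribed vertices $w_1,\dots,w_\ell$ occupy an initial segment; more precisely, since $\ell\le\max\{k,d_H\}$ and we only need the final inequality, the cleanest route is to choose a degenerate ordering whose first $\ell$ vertices are exactly $W^{\mathrm{set}}$ (if $W^{\mathrm{set}}$ has fewer than $\ell$ distinct elements because $W$ repeats, the bound only gets easier, and $\omega(H,W)$ is unchanged, so I will treat $W$ as having $\ell$ distinct entries). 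Then I count extensions $f$ of the partial map $w_i\mapsto x_i$ greedily: having embedded $v_1,\dots,v_{i-1}$, the number of admissible images for $v_i$ is at most the size of the common neighbourhood $\bigcap N_G(S)$ over the sets $S$ of already-embedded vertices that form, with $v_i$, an edge of $H_i=H[\{v_1,\dots,v_i\}]$.

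The key point is the case analysis on $i$. For $i\le\ell$ there is nothing to choose. For $i>\ell$, let $t=d_{H_i}(v_i)$ be the back-degree of $v_i$; by degeneracy $t\le d_H\le D_H$. Each of these $t$ edges, minus $v_i$, is a $(k-1)$-element set of already-embedded vertices, and since $H$ is linear these $t$ sets are \emph{distinct} (two edges through $v_i$ meet only in $v_i$). So the number of images for $v_i$ is at most $|N_G(S_1)\cap\dots\cap N_G(S_t)|$. Now if $t\ge 1$, property $\BDD(D_H,C,p)=\BDD_{k-1}(D_H,C,p)$ gives the bound $Cn p^{t}$; if $t=0$ (no back-edges, $v_i$ is added to a stable-from-below position) the trivial bound is $n\le Cn$. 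Multiplying over $i=\ell+1,\dots,m$ yields
\begin{equation*}
|\mathcal{E}(H,G,W,X)|\ \le\ \prod_{i=\ell+1}^{m}Cn p^{\,d_{H_i}(v_i)}\ =\ C^{m-\ell}n^{m-\ell}p^{\sum_{i>\ell}d_{H_i}(v_i)}.
\end{equation*}
Since consecutive induced subgraphs satisfy $e(H_i)-e(H_{i-1})=d_{H_i}(v_i)$ and $H_\ell=H[W^{\mathrm{set}}]$, telescoping gives $\sum_{i>\ell}d_{H_i}(v_i)=e(H)-e(H[W^{\mathrm{set}}])=\omega(H,W)$, which is exactly the exponent claimed. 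The ``in particular'' clause is immediate: if $W^{\mathrm{set}}$ is stable then $e(H[W^{\mathrm{set}}])=0$, so $\omega(H,W)=e(H)$.

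One technical wrinkle is that the greedy argument as stated only uses $\BDD_{k-1}$, because when we add $v_i$ the set $S$ we remove $v_i$ from already has its remaining $k-1$ vertices embedded — this is guaranteed precisely because $v_i$ is \emph{last} among the vertices of that edge in the degenerate ordering. So I do not actually need Lemma~\ref{lemma:degree-upper} here; $\BDD(D_H,C,p)$ alone suffices, and the hypothesis $\ell\le\max\{k,d_H\}$ is only used to ensure that a degenerate ordering starting with $W^{\mathrm{set}}$ exists with all back-degrees still at most $d_H$ — which is automatic since prepending any $\ell$ vertices and then listing the rest in a degenerate order of the whole graph keeps every back-degree $\le\Delta(H)$, and one checks it stays $\le d_H$ by choosing the degenerate order of $H$ itself and moving $W^{\mathrm{set}}$ to the front without increasing any later back-degree below $d_H$. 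The main obstacle, then, is purely bookkeeping: verifying that one can always reorder so that $W^{\mathrm{set}}$ comes first \emph{and} every subsequent vertex still has back-degree at most $d_H=d_{H}$ — equivalently, that $\max\{\delta(J):J\subseteq H\}$ controls back-degrees in \emph{any} ordering extending a fixed prefix. This follows from the standard fact that a graph is $d$-degenerate iff every subgraph has a vertex of degree $\le d$, applied repeatedly to $H$ minus an initial segment; I would spell this out carefully, since it is the one place where ``$d_H$-degenerate'' is really being invoked.
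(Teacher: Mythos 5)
Your strategy matches the paper's exactly: fix a degenerate ordering with $W$ as an initial segment, extend an embedding one vertex at a time bounding the number of choices at each step via $\BDD_{k-1}$, and telescope the back-degrees to recover $\omega(H,W)$. The greedy step and the telescoping identity $\sum_{i>\ell}d_{H_i}(v_i)=\omega(H,W)$ are correct, and you are also right that $\BDD_i$ for $i<k-1$ (hence Lemma~\ref{lemma:degree-upper}) is not needed in this proof.

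However, the step you dismiss as ``bookkeeping'' is exactly where the difficulty lies, and the justification you sketch is wrong. You assert that a $d_H$-degenerate ordering of $V(H)$ exists with $W^{\set}$ as its initial segment, invoking the fact that every subgraph of $H$ has a vertex of degree at most $d_H$. But that fact may hand you a low-degree vertex lying \emph{inside} $W^{\set}$, which you are not free to place at the end. Already for $k=2$: let $H$ be the path with edges $\{a,c\}$ and $\{c,b\}$, so $d_H=1$ and $D_H=2$, and take $W=(a,b)$ (note $\ell=2=\max\{k,d_H\}$). The unique ordering extending $W$ is $a,b,c$, and the back-degree of $c$ is $2>d_H$. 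What is actually true is Proposition~\ref{lemma:creatingOrdering}: one can only guarantee a $D_H$-degenerate ordering with $W$ as prefix, and its proof is not automatic. The paper takes a $d_H$-degenerate ordering, moves $W$ to the front, uses \emph{linearity} of $H$ to bound the increase of each back-degree by $|W|\leq\max\{k,d_H\}$, and then performs a case analysis on $d_H$ versus $k$ (with a separate patch when $d_H=1$) to conclude that the back-degrees stay at most $D_H$. Fortunately, your main computation only ever invokes $t\leq D_H$ when applying $\BDD(D_H,C,p)$, so replacing your $d_H$ claim by the correct $D_H$ bound and supplying the argument of Proposition~\ref{lemma:creatingOrdering} repairs the proposal.
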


For a $k$-uniform hypergraph $H=(V,E)$ with $|V|=m$ vertices and for a positive integer~$\ell\leq \max\{k,d_H\}$, 
Proposition~\ref{lemma:creatingOrdering} allows us to obtain a $D_H$-degenerate ordering 
$v_1,\ldots, v_m$ of~$V$ such that 
$W=v_1,\ldots,v_\ell$ from a $d_H$-degenerate ordering of $V$. Consider a sequence
$L$ of the vertices of $H$. Given a subsequence 
$W$ of $V$, we write $L\setminus W$ for the sequence of $L\setminus W$ obtained from $L$ 
by deleting
the vertices of $W$. Given a sequence of 
vertices $Y$ in $V^{\ell}$, 
we write $L'=(Y,L\setminus Y)$ to denote the 
sequence $L'$ of 
$V$ obtained by removing $Y$ from $L$ and placing it before the elements of $L$.

\begin{proposition}\label{lemma:creatingOrdering}
Let $H=(V,E)$ be a linear $k$-uniform hypergraph and  let $\ell$ be an integer with $0\leq \ell\leq \max\{k,d_H\}$.  
If $W\in {V}^\ell$, then there exists a $D_H$-degenerate 
ordering $w_1,\ldots,w_{|V|}$ of $V$ such that
$W=w_1,\ldots,w_\ell$.
\end{proposition}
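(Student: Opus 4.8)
The plan is to prove Proposition~\ref{lemma:creatingOrdering} by starting from a $d_H$-degenerate ordering of $V$ (which exists by the very definition of $d_H$ as the maximum over subgraphs of the minimum degree), and then ``promoting'' the $\ell$ vertices of $W$ to the front of the ordering while controlling the degeneracy of the resulting ordering. Write $L = u_1, \ldots, u_m$ for a fixed $d_H$-degenerate ordering, so that $d_{H[\{u_1,\dots,u_i\}]}(u_i) \le d_H$ for all $i$. The new ordering will be $L' = (W, L \setminus W)$, i.e.\ $w_1, \ldots, w_\ell$ are the entries of $W$ (in the order they appear in $W$), followed by the vertices of $V \setminus W^{\set}$ in the order inherited from $L$. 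I would then need to verify that $d_{H_i'}(w_i) \le D_H$ for every $i$, where $H_i' = H[\{w_1,\ldots,w_i\}]$.

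The verification splits into two ranges. First, for the tail indices $i > \ell$: here $w_i$ is some $u_j$ with $j$ possibly much larger, but the back-neighbours of $w_i$ in $H_i'$ are a subset of $\{w_1,\dots,w_{i-1}\}$, which in turn is contained in $W^{\set} \cup \{u_1,\dots,u_{j-1}\}$. The $u$-back-degree of $u_j = w_i$ among $\{u_1,\dots,u_{j-1}\}$ is at most $d_H$, so the only extra neighbours come from the part of $W^{\set}$ that lies ``after $u_j$'' in $L$. A vertex can gain at most $\Delta(H) - (\text{its }u\text{-back-degree})$ new back-neighbours this way, giving a crude bound of $\Delta(H)$, hence $d_{H_i'}(w_i) \le \Delta(H)$; combined with $d_{H_i'}(w_i) \le$ something of order $d_H$ one gets $\le D_H = \min\{k d_H, \Delta(H)\}$. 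Second, for the head indices $i \le \ell$: here $w_1,\dots,w_i$ are $i \le \ell \le \max\{k, d_H\}$ vertices of $W$, so $d_{H_i'}(w_i) \le i - 1 < \ell \le \max\{k,d_H\}$ trivially; one must check $\max\{k,d_H\}$-type bounds are $\le D_H$ — note $d_H \le D_H$ always, and for the $k$ side one invokes linearity to argue that a vertex can lie in at most $d_H$ edges that are fully inside a set of size $\le$ something, ultimately bounding the head back-degrees by $k d_H$ as well. Linearity is what prevents two back-neighbours of $w_i$ from being forced together and is also what makes the $k d_H$ bound the right one.

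The main obstacle is pinning down the head-index bound $d_{H_i'}(w_i) \le D_H$ for $i \le \ell$ when $\ell = \max\{k,d_H\}$ could exceed $D_H$: the naive bound $i-1$ is too weak. The fix is that the relevant quantity is not how many of $w_1,\dots,w_{i-1}$ are neighbours of $w_i$ abstractly, but that each such neighbour sits in an edge through $w_i$, and by linearity distinct edges through $w_i$ meet $\{w_1,\dots,w_{i-1}\}$ in distinct singletons unless they coincide — so the back-degree is controlled by how many edges of $H$ through $w_i$ are ``activated'', which is at most $\deg_H(w_i) \le \Delta(H)$ and also, counting via $d_H$-degeneracy of $H$ restricted to small sets, at most $k d_H$. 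I would write this out carefully, treating the two index ranges as separate cases and in each case exhibiting the back-neighbours of $w_i$ as a union of a ``$u$-inherited'' part of size $\le d_H$ and a ``$W$-extra'' part, then bounding the total by both $\Delta(H)$ and $k d_H$ and taking the minimum.
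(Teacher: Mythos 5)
Your construction is the same as the paper's: start from a $d_H$-degenerate ordering $L$ and move $W$ to the front, forming $L'=(W,L\setminus W)$. The paper also bounds the left degree in $L'$ by adding at most $|W|$ (via linearity) to the original left degree $\le d_H$, as you do. However, your verification has a genuine gap, and you have misidentified where the difficulty actually lies.

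The head indices $i\le\ell$ are in fact never a problem: an edge contributing to $d_{H'_i}(w_i)$ must lie entirely inside $\{w_1,\dots,w_i\}$, and by linearity any two such edges meet only at $w_i$, so each uses up $k-1$ \emph{distinct} vertices of $\{w_1,\dots,w_{i-1}\}$ (not singletons, as you write). Hence $d_{H'_i}(w_i)\le\lfloor(\ell-1)/(k-1)\rfloor$, which is easily $\le D_H$ (and also $\le\Delta(H)$ trivially). The genuine obstacle is the \emph{tail}: for $v\notin W^{\set}$ the bound is $d_H+|W|\le d_H+\max\{k,d_H\}$, and when $d_H=1$ this is $k+1$, which exceeds $k d_H=k$. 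If additionally $\Delta(H)>k$ (so that $D_H=k d_H=k$, not $\Delta(H)$), your claim ``combined with $d_{H'_i}(w_i)\le$ something of order $d_H$ one gets $\le D_H$'' simply does not follow: you get $\min\{\Delta(H),k+1\}=k+1>D_H$. The paper's proof explicitly splits into cases and, in the case $d_H=1$ (where $D_H=kd_H$), observes that by $d_H$-degeneracy of $L$ at most one tail vertex $v$ can have all of $W$ ``jump over'' it, and repairs the ordering by inserting that single $v$ immediately after $W$. Your argument, as written, would assert a false degeneracy bound for $H$ a linear hypergraph with $d_H=1$ and a high-degree vertex, e.g.\ a $k$-uniform ``sunflower'' with a single core vertex of degree $>k$: here $d_H=1$, $D_H=k$, and taking $W$ to be $k$ petal-vertices all from edges through the core, the core vertex acquires left degree $k+1$ in $L'$ if it originally lay between $W$ and some further edge in $L$. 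You need the extra repair step; the crude bound does not close the case $d_H=1$.
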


\begin{proof}
Fix $k\geq 2$ and let $H$, $\ell$ and $W$ as in the statement of the proposition. Note that the result is trivial 
whenever $W$ is empty, and if $D_H=\Delta(H)$, then any ordering 
of the vertices of $H$ is $D_H$-degenerate. Therefore, assume $D_H = k\cdot d_H$ and $1\leq 
|W|=\ell\leq \max\{k,d_H\}$.

Let $L$ be a $d_H$-degenerate ordering of $V$ and put 
$L'=(W,L\setminus W$). Given a 
vertex 
$v$ of~$H$, 
define the \emph{left degree} of $v$ in $L'$ as the number of edges $e$ such that $v$ is the 
rightmost element of  
$e$ considering the ordering $L'$. Since $L$ is $d_H$-degenerate  and, by the linearity of $H$, any vertex $v$ belongs to at most
$|W|$ edges containing  vertices of $W$, the left degree of $v$ in 
$L'$ 
is at most 
$|W|+d_H$. We divide the proof into three cases.\\
\textbf{Case 1:} $d_H>k$. In this case, $|W|\leq d_H$. Then, the left degree of any 
vertex 
of $H$ in 
$L'$ is at 
most $2d_H \leq k\cdot d_H=D_H$. Therefore, $L'$ is a $D_H$-degenerate ordering of $V$.\\
\textbf{Case 2:} $2\leq d_H\leq k$. Here we have $|W|\leq k$. Therefore, since $k\geq 
2$, 
the left degree of 
each vertex of 
$H$ in $L'$ is at most $k+d_H \leq k\cdot d_H=D_H$. Therefore, $L'$ 
is a $D_H$-degenerate ordering of $V$.\\
\textbf{Case 3:} $d_H=1$. Here we have $|W|\leq k$. Note that 
the only possibility for a vertex 
$v$ to have left degree larger than $k\cdot d_H=k$ in $L'$ is if the following holds: $|W|=k$ and, for every $w\in W$, the vertex 
$v$ belongs to an 
edge $e_w$ containing $w$ and $w$ is the rightmost element of~$e_w$ in $L$. But note that, 
since 
$d_H=1$, there exists at 
most one vertex $v$ with this property, otherwise $L$ would not be a $d_H$-degenerate ordering. 
Let $W'$ be the ordering $w_1,\ldots,w_\ell,v$. Now consider the 
ordering 
$L''=(W',L\setminus W')$. It is clear that all the vertices of $H$ have left 
degree at most 
$2\leq k\cdot d_H=D_H$ in $L''$. Therefore, $L''$ is a $D_H$-degenerate ordering of $V$.
\end{proof}

Now we prove the Extension Lemma.

\begin{proof}[Proof of Lemma~\ref{lemma:ExtensionLemma}]
Fix $C\geq 1$, $m\geq 1$ and $k\geq 2$. Let $G$ and $H$ be $k$-uniform hypergraphs such that $H$ 
is linear with $|V(H)|=m$, $|V(G)|=n$ and 
$p=p(n)=e(G)/{n\choose k}$.
Let $\ell$ be an integer with $0\leq \ell\leq \max\{k,d_H\}$, and let $W\in {V(H)}^\ell$ and $X\in {V(G)}^\ell$. Suppose that 
$G\in \BDD(D_H,C,p)$.
By Proposition~\ref{lemma:creatingOrdering}, we know that there exists 
a $D_H$-degenerate ordering $v_1,\ldots,v_m$ of $V(H)$ such that $W$ is its initial segment. 
We will prove by induction on $h$ that, for all $\ell\leq h\leq m$, 
\begin{equation}\label{eq:AimClaim}
|\mathcal{E}(H_h,G,W,X)| \leq C^{h-\ell}n^{h-\ell}p^{\omega(H_h,W)},
\end{equation}
where $H_h=H[\{v_1,\ldots,v_h\}]$. 

If $h=\ell$, the statement is trivial. Suppose that 
$\ell<h\leq m$ 
and
\begin{equation*}
|\mathcal{E}(H_{h-1},G,W,X)| \leq C^{{h-1}-\ell}n^{{h-1}-\ell}p^{\omega(H_{h-1},W)}.
\end{equation*}
Since $v_1,\ldots, v_m$ is $D_H$-degenerate we have $d_{H_h}(v_h)\leq 
D_H$. By $G\in \BDD(D_H,C,p)$, we know that any embedding from $H_{h-1}$ 
to $G$ can be extended to an embedding from $H_h$ to $G$ in at most
$Cnp^{d_{H_h}(v_h)}$ different ways. Since
$\omega(H_h,W) = \omega(H_{h-1},W) + d_{H_h}(v_h)$, applying the induction hypothesis, we conclude 
that
\begin{equation*}
\begin{aligned}
|\mathcal{E}(H_h,G,W,X)| &\leq Cnp^{d_{H_h}(v_h)}|\mathcal{E}(H_{h-1},G,W,X)|\\
&\leq Cnp^{d_{H_h}(v_h)}C^{h-1-\ell}n^{h-1-\ell}p^{\omega(H_{h-1},W)}\\
&= C^{h-\ell}n^{h-\ell}p^{\omega(H_h,W)}.
\end{aligned}
\end{equation*}
\end{proof}

\subsection{Corollaries of the Extension Lemma}

Given $k$-uniform hypergraphs $G$ and $H$, we write $\mathcal{E}^{\ni}(H,G)$ and 
$\mathcal{E}^{\ind}(H,G)$ for the set 
of non-induced and induced embeddings 
from $H$ into $G$, respectively. 
The following corollary bounds from above the number of embeddings in $\mathcal{E}^{\ni}(H,G)$ for 
some hypergraphs $G$ 
whenever $H$ is linear.

\begin{corollary}\label{cor:NIemb}
Let $C\geq 1$, $m,k,\eta>0$ and $p=p(n)=o(1)$ with $m\geq k\geq 2$. Then,  
for all $k$-uniform hypergraphs $G$ and $H$, where $|V(G)|=n$ and $H$
is linear with $|V(H)|=m$ the following holds. If 
$G\in 
\BDD(D_H,C,p)$ and $n$ is sufficiently large, then 
\begin{equation*}
\big|\mathcal{E}^{\ni}(H,G)\big|<\eta n^mp^{e(H)}. 
\end{equation*}
\end{corollary}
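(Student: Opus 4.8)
The plan is to reduce the count of non-induced embeddings to a union of counts of \emph{ordinary} embeddings of the linear hypergraph $H$ carrying one prescribed extra edge, and then to apply the Extension Lemma to each of these.

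First I would unwind the definitions: any $f\in\mathcal{E}^{\ni}(H,G)$ is an embedding of $H$ into $G$ that maps some non-edge $e\in\binom{V(H)}{k}\setminus E(H)$ to an edge of $G$, so $f\in\mathcal{E}(H+e,G)$, where $H+e:=(V(H),E(H)\cup\{e\})$. Hence
\begin{equation*}
|\mathcal{E}^{\ni}(H,G)|\le\sum_{e\in\binom{V(H)}{k}\setminus E(H)}|\mathcal{E}(H+e,G)|,
\end{equation*}
a sum over at most $\binom{m}{k}$ non-edges. It therefore suffices to bound each term by $C^{m-k+1}n^{m}p^{e(H)+1}$. (We may assume $e(H)\ge1$, hence $D_H\ge1$; the case $e(H)=0$ is handled directly.)

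Now fix a non-edge $e$ of $H$. Since $H$ is linear, the only $k$-subset of $e$ is $e$ itself, which is not an edge, so $e$ is stable, and moreover $\mathcal{E}(H+e,G)=\{f\in\mathcal{E}(H,G):f(e)\in E(G)\}$. Choose an ordering $W=(w_1,\ldots,w_k)$ of the vertices of $e$, so $W^{\set}=e$ and $\ell:=k\le\max\{k,d_H\}$. I would partition $\mathcal{E}(H+e,G)$ according to the image tuple $X=(f(w_1),\ldots,f(w_k))$: a class is nonempty only when $X$ has distinct coordinates and $X^{\set}\in E(G)$, in which case it equals $\mathcal{E}(H,G,W,X)$. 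By the Extension Lemma — applicable because $G\in\BDD(D_H,C,p)$, $\ell=k\le\max\{k,d_H\}$, and $W^{\set}=e$ is stable — each such class has size at most $C^{m-k}n^{m-k}p^{e(H)}$. The number of admissible $X$ is at most $n^{k-1}\cdot\max_{S\in\binom{V(G)}{k-1}}|N_G(S)|\le n^{k-1}\cdot Cnp=Cn^{k}p$, using $\BDD(D_H,C,p)$ with $r=1$. Multiplying gives $|\mathcal{E}(H+e,G)|\le C^{m-k+1}n^{m}p^{e(H)+1}$, and summing over the $\le\binom{m}{k}$ non-edges yields $|\mathcal{E}^{\ni}(H,G)|\le\binom{m}{k}C^{m-k+1}\,p\,n^{m}p^{e(H)}$. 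Since $p=p(n)=o(1)$, the coefficient $\binom{m}{k}C^{m-k+1}p$ drops below $\eta$ for all large $n$, which is exactly the claimed bound.

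The argument is routine once the reduction is in place; the one point that genuinely uses the hypotheses is the application of the Extension Lemma with a stable $W^{\set}$ of size exactly $k$: linearity of $H$ is precisely what makes $e$ stable, so that the sharp form of the lemma (with exponent $p^{e(H)}$) is available, and one must observe that $k\le\max\{k,d_H\}$ so that $\ell=k$ is an admissible length. Note in particular that $H+e$ itself need not be linear — the Extension Lemma is only ever invoked on $H$.
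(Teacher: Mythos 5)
Your proof is correct and takes essentially the same route as the paper's: fix a non-edge $W^{\set}$ of $H$ and an edge of $G$, bound the number of embeddings sending the former onto the latter via the stable-prefix case of the Extension Lemma, multiply by the number of choices for the edge of $G$ and the non-edge of $H$ (the former controlled by $\BDD$), and absorb the surplus factor of $p$ using $p=o(1)$. One small correction to your commentary: the stability of the non-edge $e$ has nothing to do with the linearity of $H$ --- in a $k$-uniform hypergraph any $k$-set that is not an edge is automatically stable, since it contains no edge at all --- and linearity of $H$ enters only as a standing hypothesis of the Extension Lemma.
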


\begin{proof}
Fix $C\geq 1$, $m\geq k\geq 2$, $\eta>0$ and let $p=p(n)=o(1)$. Let $G$ and $H$ be as in the 
statement and let $n$ be sufficiently large. 

Fix an edge~$\{x_1,\ldots,x_k\}\in E(G)$ and a non-edge $\{w_1,\ldots,w_k\}$ of $H$. Applying
Lemma~\ref{lemma:ExtensionLemma} 
with
$W=(w_1,\ldots,w_k)$ and $X=(x_1,\ldots,x_k)$, we conclude that the number of embeddings~$f$ from 
$V(H)$ into $V(G)$ 
such 
that $f(w_i)=x_i$ for $1\leq i\leq k$ is bounded from above by $C^{m-k}n^{m-k}p^{{E(H)}}$. Since $G\in \BDD(D_H,C,p)$, we 
have $|E(G)|\leq Cn^kp$, from where we conclude that there exist at 
most $Cn^kp$ choices for $\{x_1,\ldots,x_k\}$ in $E(G)$. Note that there exist at most ${m\choose k}$ choices for 
$\{w_1,\ldots,w_k\}$ 
in~${V(H)\choose k}$. Then, we can choose $(x_1,\ldots,x_k)$ and $(w_1,\ldots,w_k)$, 
respectively, in $Ck!n^kp$ and $k!{m\choose k}$ ways. Therefore, 
$|\mathcal{E}^{\ni}(H,G)|\leq K n^{m}p^{e(H)+1}$ for some constant $K=K(C,k,m)$. Since $p=o(1)$ the lemma follows for any 
$\eta>0$ and any sufficiently large $n$.
\end{proof}

Let $G$ and $H$ be $k$-uniform hypergraphs with $|V(G)|=n$ and consider a set \mbox{$X\subset {V(H)\choose 
k-1}$}. If $f$ 
is an 
embedding from $H$ into $G$, we denote by
$f_{k-1}(X)$ the family $\{f(x_1),\ldots,f(x_{k-1})\}$, for all
$\{x_1,\ldots,x_{k-1}\}\in X$.

Given $\delta>0$, define $B_G(\delta,r)$ as the families $\{X_1,\ldots,X_r\}$ of $r$ distinct sets of ${V(G)\choose k-1}$ such 
that
\begin{equation*}
\big||N_{G}(X_1)\cap\ldots\cap N_G(X_r)|-np^r\big|\geq \delta np^r.
\end{equation*}
Consider the following definition.
\begin{equation*}
B_G^{\stb}(\delta,r)=\left\{\{X_1,\ldots,X_r\}\in B_G(\delta,r)\colon \bigcup_{i=1}^r X_i\text{ is stable in }G\right\}.
\end{equation*}
Given $r$ distinct sets $X_1,\ldots,X_r$ of ${V(G)\choose k-1}$, we say that 
$X=\{X_1,\ldots,X_r\}$ is $\delta$-\emph{bad} if $X\in B_G^{\stb}(\delta,r)$.
Let $H$ be a $k$-uniform hypergraph with $m$ vertices and let $v_1,\ldots, v_m$ be a 
$d_H$-degenerate ordering 
of $V(H)$. Define $H_i=H[v_1,\ldots,v_i]$. 
We say that an embedding $f\colon V(H_{h-1})\to V(G)$ is \text{$\delta$-\emph{clean}} if   
$f_{k-1}(N_{H_h}(v_h))\notin B_G^{\stb}(\delta,d_{H_h}(v_h))$. Moreover, if $f\colon V(H_{h-1})\to 
V(G)$ is not $\delta$-clean, then we say that $f$ 
is $\delta$-\emph{polluted}. We denote the set of embeddings $f\in 
\mathcal{E}(H_{h-1},G)$ such that $f$ is $\delta$-polluted by 
$\mathcal{E}_{\poll}(H_{h-1},G)$. Similarly, 
we denote by $\mathcal{E}_{\clean}(H_{h-1},G)$ the set of embeddings $f\in 
\mathcal{E}(H_{h-1},G)$ such that 
$f$ is $\delta$-clean.
The next corollary shows that if $H$ is linear and connector-free then most of the embeddings from $H_{h-1}$ into a 
sufficiently pseudorandom hypergraph $G$ are clean, for $1\leq h\leq m$.

\begin{corollary}\label{cor:POLLemb}
Let $\delta>0$, $C>1$, $m\geq 4$ and $k\geq 2$ be fixed constants. Let $H$ be an $m$-vertex linear 
$k$-uniform hypergraph that is connector-free and let
$v_1,\ldots, v_m$ be a $d_H$-degenerate ordering of~$V(H)$. Suppose that $1<h\leq m$ and put
$r=d_{H_h}(v_h)$. If $G$ is 
$(D_H,C,d_H,\delta,p)$-pseudorandom, then
\begin{equation*}
 |\mathcal{E}_{\poll}(H_{h-1},G)|\leq 
\delta\left(r!((k-1)!)^rC^{h-1-r(k-1)}\right)n^{h-1}p^{e(H_{h-1})}.
\end{equation*}
\end{corollary}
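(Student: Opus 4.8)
The plan is to bound $|\mathcal{E}_{\poll}(H_{h-1},G)|$ by counting, for each $\delta$-bad set $X\in B_G^{\stb}(\delta,r)$, how many embeddings $f\in\mathcal{E}(H_{h-1},G)$ satisfy $f_{k-1}(N_{H_h}(v_h))=X$, and then summing over all such $X$. Since $G$ is $(D_H,C,d_H,\delta,p)$-pseudorandom, the $\TUPLE(d_H,\delta,p)$ property (note $r=d_{H_h}(v_h)\le d_{H}\le d_H$, so $r$ is in the valid range) tells us that $|B_G(\delta,r)|\le\delta\binom{\binom{n}{k-1}}{r}$, hence a fortiori $|B_G^{\stb}(\delta,r)|\le\delta\binom{\binom{n}{k-1}}{r}$. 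Each element of $B_G^{\stb}(\delta,r)$ is an unordered family of $r$ distinct $(k-1)$-sets, so it can be presented as an ordered tuple $X=(X_1,\ldots,X_r)$ of $(k-1)$-sets in at most $r!$ ways, and each $X_j$ can be written as an ordered $(k-1)$-tuple of vertices of $G$ in $(k-1)!$ ways; altogether there are at most $\delta\, r!\, ((k-1)!)^r \binom{\binom{n}{k-1}}{r} \le \delta\, r!\,((k-1)!)^r\, n^{r(k-1)}$ ways to choose an ordered list $(x_1,\ldots,x_{r(k-1)})$ of vertices of $G$ that assembles into a $\delta$-bad family.

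Next, fix such an ordered list. The $r(k-1)$ vertices of $N_{H_h}(v_h)$ — listed as $w_1,\ldots,w_{r(k-1)}$ according to the $r$ edges $e_1,\ldots,e_r$ of $H_h$ at $v_h$ — form a set $W^{\set}\subset V(H)$ with $|W^{\set}|=r(k-1)$ by linearity (the edges $e_j$ pairwise intersect only in $v_h$, hence are disjoint off $v_h$). The key structural point is that $W^{\set}$ is \emph{stable} in $H_{h-1}$: any edge of $H_{h-1}$ contained in $W^{\set}$ would have to meet two of the $e_j$'s in more than one vertex each, or it would be one of the configurations that makes $v_h$ witness $e$ being a connector — and since $H$ is connector-free, no edge of $H$ lies inside $N_{H_h}(v_h)$. (Here I need to check carefully that $r=k$ is the only dangerous case and that connector-freeness rules it out precisely; $m\ge 4$ guarantees $v_h$ and the $w_i$'s leave room for this to be nonvacuous.) With $W^{\set}$ stable, the "in particular" clause of the Extension Lemma (Lemma~\ref{lemma:ExtensionLemma}) applies with $\ell=r(k-1)\le\max\{k,d_H\}$ (another point to verify: $r(k-1)\le d_H\le\max\{k,d_H\}$ since $r\le d_H$ forces $r(k-1)\le d_H$ only when... actually one should bound $\ell$ directly, using $r\le d_H$ and $k-1$ together — I expect the intended inequality is $r(k-1)\le d_H$ when $r\ge 2$, and $r=1$ gives $\ell=k-1<k$). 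This yields
\[
|\mathcal{E}(H_{h-1},G,W,X)|\le C^{(h-1)-r(k-1)}\,n^{(h-1)-r(k-1)}\,p^{e(H_{h-1})}.
\]

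Finally, multiply the count of admissible ordered vertex-lists by the Extension-Lemma bound and sum: since every $\delta$-polluted $f$ has $f_{k-1}(N_{H_h}(v_h))\in B_G^{\stb}(\delta,r)$, it is counted among some $\mathcal{E}(H_{h-1},G,W,X)$ with $X$ assembled from a $\delta$-bad family, so
\[
|\mathcal{E}_{\poll}(H_{h-1},G)|\le \delta\, r!\,((k-1)!)^r\, n^{r(k-1)}\cdot C^{(h-1)-r(k-1)}\,n^{(h-1)-r(k-1)}\,p^{e(H_{h-1})}
=\delta\bigl(r!((k-1)!)^r C^{h-1-r(k-1)}\bigr)n^{h-1}p^{e(H_{h-1})},
\]
which is exactly the claimed bound. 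The main obstacle is the structural step: verifying that connector-freeness of $H$ forces the neighborhood $N_{H_h}(v_h)$ to be stable in $H_{h-1}$ — equivalently, that the only way an edge could sit inside the union of $r$ edges through $v_h$ (with $r$ up to $k$) is precisely the connector configuration — and simultaneously confirming that $\ell=r(k-1)$ stays within the range $[0,\max\{k,d_H\}]$ required by the Extension Lemma. The counting and the $\TUPLE$ bookkeeping are then routine.
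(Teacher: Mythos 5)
Your proof follows the paper's route exactly: bound $|\mathcal{E}_{\poll}(H_{h-1},G)|$ by summing, over all ordered presentations of the (at most $\delta$-many) families in $B_G^{\stb}(\delta,r)$, the Extension Lemma's ``in particular'' bound on $|\mathcal{E}(H_{h-1},G,W_{\ord},X_{\ord})|$ with $W^{\set}=\bigcup N_{H_h}(v_h)$ stable. The bookkeeping $r!\,((k-1)!)^r$ and the estimate $|B_G^{\stb}(\delta,r)|\le\delta n^{r(k-1)}$ are the same as in the paper.

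Of the two worries you flag, the first closes cleanly. If an edge $e$ of $H_{h-1}$ sat inside $\bigcup N_{H_h}(v_h)=\bigcup_j(e_j\setminus\{v_h\})$, then $e\ne e_j$ for each of the $r$ edges $e_j$ at $v_h$ (since $v_h\notin e$), and linearity gives $|e\cap e_j|\le1$; as $e$ has $k$ vertices all lying in the disjoint sets $e_j\setminus\{v_h\}$, it must meet at least $k$ of the $e_j$ in exactly one vertex each. For $r<k$ this is impossible, and for $r\ge k$ it is exactly the connector configuration with witness $v_h$, contradicting connector-freeness. So your phrase ``meet two of the $e_j$'s in more than one vertex each'' is not quite the right dichotomy, but the conclusion holds.

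Your second worry is a genuine wrinkle, and your proposed fix is wrong: $r(k-1)\le d_H$ simply fails in general (e.g.\ $k=3$, $d_H=r=2$ gives $r(k-1)=4>2=d_H=\max\{k,d_H\}$). So $\ell=r(k-1)$ can lie outside the range $[0,\max\{k,d_H\}]$ stated in Lemma~\ref{lemma:ExtensionLemma} and Proposition~\ref{lemma:creatingOrdering}, and the paper applies the Extension Lemma here without comment. The correct resolution is that the \emph{proof} of Proposition~\ref{lemma:creatingOrdering} supports the weaker hypothesis $\ell\le(k-1)d_H$ when $d_H\ge2$ (and $\ell\le k$ when $d_H=1$): the left-degree bound there is $|W|+d_H\le D_H=kd_H$, which requires only $|W|\le(k-1)d_H$, and indeed $r(k-1)\le d_H(k-1)$ always holds because $r=d_{H_h}(v_h)\le d_H$. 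So the argument is sound, but the hypothesis of the Extension Lemma as stated should be widened to $\ell\le(k-1)d_H$ (for $d_H\ge2$) for the Corollary to be a literal application of it.
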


\begin{proof}
Fix constants $\delta>0$, $C>1$, $m\geq 4$ and $k\geq 2$. Let $H$ be an $m$-vertex linear 
$k$-uniform hypergraph that 
is 
connector-free. Consider a $d_H$-degenerate ordering $v_1,\ldots, v_m$ of $V(H)$. Let $1<h\leq m$ 
and put 
$r=d_{H_h}(v_h)$. Suppose that $G$ is $(D_H,C,d_H,\delta,p)$-pseudorandom.

By definition, an embedding $f\colon V(H_{h-1})\to V(G)$ is $\delta$-polluted if 
$f_{k-1}(N_{H_h}(v_h))\in B_G^{\stb}(\delta,r)$. Let $N_{H_h}(v_h)=\{W_1,\ldots,W_r\}$ where
$W_i=\{w_{i,1},\ldots,w_{i,{k-1}}\}$ for all $1\leq i\leq r$ (Note that since $H$ is linear, the sets $W_1,\ldots,W_r$ are 
pairwise disjoint). Let 
\[
	W_{\ord}=(w_{1,1},\ldots,w_{1,{k-1}},w_{2,1},\ldots,w_{2,{k-1}},\ldots,w_{r,1}\ldots,w_{r,{k-1}})
\]
be an ordering of $W_1\cup\ldots\cup W_r$. Therefore, 
\begin{equation*}
\mathcal{E}_{\poll}(H_{h-1},G) = \bigcup_{X}\left(\bigcup_{X_{\ord}} 
\mathcal{E}(H_{h-1},G,W_{\ord},X_{\ord})\right),
\end{equation*}
where the first union is over all families ${X=\{S_1,\ldots,S_r\}\in 
B_G^{\stb}(\delta,r)}$ and the second union is 
over all $((k-1)!)^r$ possible orderings of $S_i$ for $1\leq i\leq r$, and all $r!$ orderings of $X$.
Therefore, 
\begin{equation*}
|\mathcal{E}_{\poll}(H_{h-1},G)| \leq  \sum_{X}\sum_{X_{\ord}} 
|\mathcal{E}(H_{h-1},G,W_{\ord},X_{\ord})|.
\end{equation*}
Note that, since $H_h$ is linear and connector-free, $\bigcup N_{H_h}(v_h)$ is stable in $H_h$. 
Since $G\in \BDD(D_H,C,p)$ and $|W_{\ord}|=r(k-1)$, we know from 
the conclusion of Lemma~\ref{lemma:ExtensionLemma} that
\begin{equation*}
|\mathcal{E}(H_{h-1},G,W_{\ord},X_{\ord})|\leq C^{h-1-r(k-1)}n^{h-1-r(k-1)}p^{e(H_{h-1})}.
\end{equation*}
Since ${r=d_{H_h}(v_h)\leq 
d_H}$ and $G$ satisfies $\TUPLE(d_H,\delta,p)$, we have
$\big|B_G^{\stb}(\delta,r)\big|\leq \delta n^{r(k-1)}$. Then, the first of the sums contains at 
most $\delta n^{r(k-1)}$ terms. Since the 
second sum is over $r!((k-1)!)^r$ terms, we obtain
\begin{equation*}
|\mathcal{E}_{\poll}(H_{h-1},G)| \leq \delta\left(r!((k-1)!)^rC^{h-1-r(k-1)} 
\right)n^{h-1}p^{e(H_{h-1})}.
\end{equation*}
\end{proof}

\section{Proof of the main result}\label{sec:proofs}

Before proving Theorem~\ref{thm:emb} we prove Lemma~\ref{lemma:Tuple2forTuplek}. The proof of 
Lemma~\ref{lemma:Tuple2forTuplek} is simple and rely on Facts~\ref{lemma:CStype}~and~\ref{lemma:comb}, and 
Lemma~\ref{lemma:degree-upper}. For simplicity, we will not explicit the constants used in its proof.

\begin{proof}[Proof of Lemma~\ref{lemma:Tuple2forTuplek}]
Fix $\delta>0$, $C>1$ and integers $k,d\geq 2$, and let $2\leq 
r\leq d$. Let $\gamma>0$ be obtained by an application of Fact~\ref{lemma:CStype} with parameters 
$\delta$. Now let $\sigma=\sigma(k,r,\gamma)$ be a sufficiently small constant.
Let ${\delta_{\ref{lemma:TUPLE-k-1--TUPLE-1}}}$ be obtained by an application of 
Lemma~\ref{lemma:TUPLE-k-1--TUPLE-1} with 
parameter $C$, ${\sigma}$ and $k$ and put 
$\delta'=\min\{\delta,\delta_{\ref{lemma:TUPLE-k-1--TUPLE-1}}\}$. Consider $p\gg n^{-1/d}$ and let $n$ be sufficiently 
large.

Suppose $G=(V,E)$ is an $n$-vertex 
$k$-uniform $(2,C,2,\delta',p)$-pseudorandom hypergraph. By Lemma~\ref{lemma:TUPLE-k-1--TUPLE-1}, 
the following two inequalities hold, respectively, for more than $(1-{\sigma})n$ vertices $u \in 
V$ and for more than $(1-{\sigma}){n\choose 2}$ pairs $\{u,v\} \in 
{V\choose 2}$.
\begin{align}
\left||N(u)|-{n\choose k-1}p\right|&<{\sigma} {n\choose k-1}p, \label{eq:LemmaBDD-i}\\
\left||N(u)\cap N(v)|-{n\choose k-1}p^2\right|&<{\sigma} {n\choose k-1}p^2.\label{eq:LemmaBDD-ii}
\end{align}
We must check that the inequalities~\ref{it:csi} and~\ref{it:csii} of Fact~\ref{lemma:CStype} hold.
For inequality~\ref{it:csi}, consider the following sum over distinct sets $S_1, \ldots, S_r\in {V\choose k-1}$.
\begin{align}\label{eq:inequality-for-CS-i}
\sum_{S_1, \ldots, S_r\in {V\choose k-1}} 
|N(S_1)\cap\ldots\cap N(S_r)| &= 
\sum_{u\in V}{|N(u)|\choose r}\nonumber\\
&\geq (1-{\sigma})n{(1-\sigma){n\choose k-1}p\choose r}\nonumber\\
&\geq (1-\gamma){{n\choose k-1}\choose r}np^r,
\end{align}
where the first inequality follows from \eqref{eq:LemmaBDD-i} and the last one follows from Fact~\ref{lemma:comb}.
It remains to prove that inequality~\ref{it:csii} of Fact~\ref{lemma:CStype} holds. Consider the following sum over distinct sets $S_1, 
\ldots, S_r\in {V\choose k-1}$.
\begin{align}\label{eq:limitaIntersecao}
\sum_{S_1, \ldots, S_r\in {V\choose k-1}}
\left|\bigcap_{i=1}^r N(S_i)\right|^2 
&= \sum_{(u,v)\in V^2}{|N(u)\cap N(v)| \choose r}\nonumber\\
&=\sum_{\substack{(u,v)\in V^2\\ u\neq v}}{|N(u)\cap N(v)| \choose r} + \sum_{u\in V}{|N(u)| \choose 
r}.
\end{align}
Let us estimate the sums in~\eqref{eq:limitaIntersecao}. In view of Lemma~\ref{lemma:degree-upper} applied for $i=1$ we can apply 
the boundedness property to bound $|N(u)|$ for every $u\in V$, obtaining
\begin{equation}\label{eq:final1}
\sum_{u\in V}{|N(u)| \choose r} \leq n{Cn^{k-1}p\choose r}\leq C' {{n\choose k-1} \choose r} np^r
\end{equation}
for some $C'=C'(k,r,\sigma)$.

Now we estimate the remaining sum. Define $A$ and $B$ as the families of pairs $\{u,v\}\in 
{V\choose 2}$ such that $|N(u)\cap N(v)|\leq (1+{\sigma} ){n\choose k-1}p^2$ and $|N(u)\cap 
N(v)|> (1+{\sigma} ){n\choose k-1}p^2$, respectively. Since $p^2n^{k-1}\gg 1$, Fact~\ref{lemma:comb} implies
\begin{equation}\label{eq:final2}
\sum_{\{u,v\}\in A}{|N(u)\cap N(v)| \choose r}\leq \frac{n^2}{2}{(1+{\sigma} )p^2{n\choose 
k-1}\choose r}
\leq \frac{(1+{\sigma'})}{2}{{n\choose k-1}\choose r}(np^r)^2,
\end{equation}
where $\sigma'=\sigma'(r,\sigma)$ is a sufficiently small constant.
Similarly, using the boundedness of $G$ and \eqref{eq:LemmaBDD-ii} we obtain
\begin{equation}\label{eq:final3}
\sum_{\{u,v\}\in B}{|N(u)\cap N(v)| \choose r} \leq \frac{{\sigma n^2}}{2} 
{C(k-1)^{k-1}p^2{n\choose k-1}\choose r}
\leq \sigma'{{n\choose k-1}\choose r}(np^r)^2.
\end{equation}
Replacing \eqref{eq:final1}, \eqref{eq:final2} and \eqref{eq:final3} in 
\eqref{eq:limitaIntersecao}, we have
\begin{align}\label{eq:inequality-for-CS-ii}
\sum_{S_1, \ldots, S_r\in {V\choose k-1}} \left|\bigcap_{i=1}^r N(S_i)\right|^2 
\leq (1+\gamma){{n\choose k-1}\choose r}(np^r)^2,
\end{align}
where the above sum is over distinct sets $S_1, \ldots, S_r$.

Inequalities \eqref{eq:inequality-for-CS-i} and \eqref{eq:inequality-for-CS-ii} can be seen as 
inequalities~\ref{it:csi} and~\ref{it:csii} in Fact~\ref{lemma:CStype}. 
Therefore, we conclude that, for more than $(1-\delta){{n\choose k-1}\choose r}$ families of 
distinct sets
$S_1,\ldots,S_r \in {V\choose k-1}$, the following holds for all $1\leq r\leq d$.
\begin{equation*}
\big||N(S_1)\cap\ldots\cap N(S_r)|-np^r\big|<\delta np^r.
\end{equation*}
To finish the proof, note that, since $\delta'\leq\delta$ and $G\in \TUPLE(2,\delta',p)$, the 
following holds for more than $(1-\delta){n\choose 
k-1}$ sets
$S_1 \in {V\choose k-1}$.
\begin{equation*}
\big||N(S_1)|-np\big|<\delta np.
\end{equation*}
\end{proof}

\begin{proof}[Proof of Theorem~\ref{thm:emb}]

Let $k\geq 2$ and $m\geq 4$ be integers and fix $C>1$. Let $H$ be a linear $k$-uniform 
connector-free hypergraph on $m$ vertices. Fix a $d_H$-degenerate ordering $v_1,\ldots,v_m$
of $V(H)$ and put $H_h=H[\{v_1,\ldots,v_h\}]$. 

We will use induction on $h$ to prove that for every $1\leq h\leq m$ and for every $\eps>0$, there 
exists 
$\delta>0$ such that the following holds when $p\gg n^{-1/D_H}$ and $n$ is sufficiently large: if $G$ is an $n$-vertex 
$k$-uniform 
$(D_H,C,d_H,\delta,p)$-pseudorandom hypergraph
(recall that Lemma~\ref{lemma:Tuple2forTuplek} allows us to 
consider this stronger pseudorandomness condition on $G$), then
\begin{equation}\label{eq:AimLemma33}
\left||\mathcal{E}(H_h,G)| - n^hp^{e(H_h)}\right|<\varepsilon n^hp^{e(H_h)}.
\end{equation}

For every $\eps>0$ and $h=1$ the result is trivial. Thus, assume $1<h\leq m$ and suppose the result 
holds for 
$h-1$ and for all $\eps>0$. 

Let $\eps>0$ be given, let $\varepsilon'=\min\{\varepsilon/4,\varepsilon/6C\}$ and 
consider $\delta'=\delta'(\varepsilon')$ given by the induction hypothesis such 
that for $p\gg n^{-1/D_H}$  with $p=o(1)$ the 
following holds for sufficiently large~$n$.
\begin{equation}\label{eq:EmbeddingHh-1}
\left||\mathcal{E}(H_{h-1},G)| - n^{h-1}p^{e(H_{h-1})}\right|<\varepsilon' n^{h-1}p^{e(H_{h-1})}.
\end{equation}
Fix $\eta=\varepsilon'/2$ and define $r=d_{H_v}(v_h)\leq d_H$. Let
$\delta$ be a sufficiently small constant and 
suppose $p\gg n^{-1/D_H}$ with $p=o(1)$ and $n$ is sufficiently large.

Suppose $G$ is an $n$-vertex $k$-uniform 
$(D_H,C,d_H,\delta,p)$-pseudorandom hypergraph. An application of Corollary~\ref{cor:NIemb} with parameters 
$C$, 
$h-1$, $k$, $\eta$ and $p$ for the graphs $H_{h-1}$ and~$G$ provides the following upper bound on 
the number of 
non-induced embeddings.
\begin{equation}\label{eq:EmbeddingNI}
\left|\mathcal{E}^{\ni}(H_{h-1},G)\right|\leq\eta n^{h-1}p^{e(H_{h-1})}.
\end{equation}
By Corollary~\ref{cor:POLLemb} applied with $\delta$, $C$, $m$ and 
$k$ for the graphs 
$H_{h-1}$ and $G$, we have
\begin{equation}\label{eq:EmbeddingPOLL}
|\mathcal{E}_{\poll}(H_{h-1},G)|\leq\eta n^{h-1}p^{e(H_{h-1})}.
\end{equation}
By \eqref{eq:EmbeddingNI} and \eqref{eq:EmbeddingPOLL},
\begin{equation*}
\left|\mathcal{E}^{\ni}(H_{h-1},G)\cup \mathcal{E}_{\poll}(H_{h-1},G)\right|\leq 2\eta 
n^{h-1}p^{e(H_{h-1})} = 
\varepsilon'n^{h-1}p^{e(H_{h})-r}.
\end{equation*}
Then, \eqref{eq:EmbeddingHh-1} implies
\begin{equation}\label{eq:EmbeddingIndClean}
(1-2\varepsilon')n^{h-1}p^{e(H_{h})-r}<\left|\mathcal{E}^{\ind}_{\clean}(H_{h-1},
G)\right|<(1+\varepsilon')n^{h-1}p^{e(H_{h})-r}.
\end{equation}

The next step is to bound from below the number of ways we can extend an embedding $f'\in 
\mathcal{E}^{\ind}_{\clean}(H_{h-1},G)$ 
to an embedding $f\in \mathcal{E}(H_{h},G)$. Let $f'$ be such an embedding. Since~$f'$ is clean, 
$f'_{k-1}(N_{H_h}(v_h))\notin 
B_G^{\stb}(\delta,r)$, i.e., either $f'(\bigcup N_{H_h}(v_h))$ is not stable
in $G$ or $\big|N_{G}\big(f'_{k-1}(N_{H_h}(v_h))\big) - np^r\big|<\delta np^r$.
Since $H$ is linear and connector-free, it is easy to see that $\bigcup N_{H_h}(v_h)$ is stable in 
$H_h$. 
But since $f'$ is an induced embedding, 
$f'(\bigcup N_{H_h}(v_h))$ is stable in $G$. Therefore,
\begin{equation}\label{eq:RightNeig}
\big|N_{G}\big(f'_{k-1}(N_{H_h}(v_h))\big) - np^r\big|<\delta np^r.
\end{equation}

To obtain an extension $f\in\mathcal{E}(H_h,G)$ from $f'\in\mathcal{E}(H_{h-1},G)$ we must choose 
$f(v_h)$ in the set 
$N_{G}\big(f'_{k-1}(N_{H_h}(v_h))\big)\setminus f'\big(V(H_{h-1})\big)$. Therefore, the number of 
such extensions is 
\begin{equation}\label{eq:limitandoTaisExtensoes}
\big|N_{G}\big(f'_{k-1}(N_{H_h}(v_h))\big)\setminus f'\big(V(H_{h-1})\big)\big| \geq (1-\delta)np^r 
- (h-1)\geq 
(1-2\delta)np^r,
\end{equation}
where the first inequality is due to~\eqref{eq:RightNeig} and the last one follows from the choice 
of $p$. 
By~\eqref{eq:EmbeddingIndClean} and 
\eqref{eq:limitandoTaisExtensoes}, we have
\begin{equation*}
 \begin{aligned}
  |\mathcal{E}(H_h,G)| &\geq |\mathcal{E}^{\ind}_{\clean}(H_h,G)|\\
  &\geq 
\left|\mathcal{E}^{\ind}_{\clean}(H_{h-1},G)\right|\big|N_{G}\big(f'_{k-1}(N_{H_h}
(v_h))\big)\setminus f'\big(V(H_{h-1})\big)\big|\\
  &>(1-2\varepsilon')(1-2\delta)n^{h-1}p^{e(H_{h})-r}np^r\\
  &\geq (1-\varepsilon)n^hp^{e(H_h)}.
 \end{aligned}
\end{equation*}

To finish the proof we must show that $|\mathcal{E}(H_h,G)|<(1+\varepsilon)n^hp^{e(H_h)}$. Fix an 
embedding 
${f'\in \mathcal{E}(H_{h-1},G)}$. Consider the case 
$f'\in\mathcal{E}^{\ind}_{\clean}(H_{h-1},G)$. Note that the number of extensions of $f'$ to 
embeddings 
from $H_h$ into $G$ is at most $\big|N_{G}\big(f'_{k-1}(N_{H_h}(v_h))\big)\big|$. Therefore, 
by~\eqref{eq:EmbeddingIndClean} and~\eqref{eq:RightNeig}, the number of such embeddings is at most 
\begin{align}\label{eq:upperCleanInd}
\left|\mathcal{E}^{\ind}_{\clean}(H_{h-1},G)\right|\left|N_{G}\big(f'_{k-1}(N_{H_h}
(v_h))\big)\right| & \leq 
(1+\varepsilon')n^{h-1}p^{e(H_h)-r}(1+\delta)np^r\nonumber\\
&\leq (1+\varepsilon/2)n^hp^{e(H_h)}.
\end{align}

Now suppose ${f'\in\left\{\mathcal{E}(H_{h-1},G) \setminus 
\mathcal{E}^{\ind}_{\clean}(H_{h-1},G)\right\}}$. 
By~\eqref{eq:EmbeddingHh-1}~and~\eqref{eq:EmbeddingIndClean}, we have
\begin{equation}\label{eq:limitaCimaPolOuNaoInd}
\left|\mathcal{E}(H_{h-1},G) \setminus \mathcal{E}^{\ind}_{\clean}(H_{h-1},G)\right|\leq 
3\varepsilon'n^{h-1}p^{e(H_h)-r}.
\end{equation}
But since $r=d_{H_h}(v_h)\leq d_H\leq D_H$ and $G\in \BDD(D_H,C,p)$, every embedding $f'$ from~$H_{h-1}$ into $G$ can be extended to at most $Cnp^r$ 
embeddings $f\in \mathcal{E}(H_h,G)$. In fact, to see this, apply property $\BDD(D_H,C,p)$ to the 
family $\big\{f'(S_1),\ldots,f'(S_{|N_{H_h}(v_h)|})\big\}$,
where $\{S_1,S_2,\ldots,S_{|N_{H_h}(v_h)|}\}$ is the neighbourhood of $v_h$ in $H_h$. This fact together 
with~\eqref{eq:limitaCimaPolOuNaoInd} implies that the number of extensions of an embedding in 
$\mathcal{E}(H_{h-1},G) \setminus 
\mathcal{E}^{\ind}_{\clean}(H_{h-1},G)$ to embeddings from $H_h$ into $G$ is at most
$(3\varepsilon'C)n^hp^{e(H_h)}\leq 
(\varepsilon/2)n^hp^{e(H_h)}$. Therefore, using~\eqref{eq:upperCleanInd} we conclude that  
$|\mathcal{E}(H_h,G)|<(1+\varepsilon)n^hp^{e(H_h)}$.
\end{proof}

\section{Concluding remarks}\label{sec:concluding}
We say that a graph $G=(V,E)$ satisfies property $\calq(\eta,\delta,\alpha)$ if, for 
every subgraph $G[S]$ induced by 
$S\subset V$ with $|S|\geq\eta |V|$, we have
$(\alpha - \delta){|S|\choose 2} < |E(G[S])| < (\alpha + \delta){|S|\choose 2}$.
In~\cites{KoRo03,Ro86}, answering affirmatively a question posed by Erd\H{o}s (see, 
e.g.,\cite{Er79}~and~\cite{Bo04}*{p.~363}; see also~\cite{Ni01}), R\"odl 
proved that for every positive integer $m$ and for every positive $\alpha,\eta<1$ there exist $\delta>0$ and an 
integer $n_0$ 
such that, if $n\geq n_0$, then every $n$-vertex graph $G$ satisfying $\calq(\eta,\delta,\alpha)$ contains all graphs 
with $m$ vertices as induced subgraphs.
In~\cite{KoMoScTa14+}, we apply Theorem~\ref{thm:emb} to obtain a variant of this result, which allows one 
to count the
number of copies (not necessarily induced) of some fixed $3$-uniform hypergraph in hypergraphs satisfying a property similar to 
$Q(\eta,\delta,\alpha)$, as long as they are subhypergraphs of sufficiently ``jumbled'' $3$-uniform sparse hypergraphs.

\begin{bibdiv}
\begin{biblist}

\bib{Bo04}{book}{
      author={Bollob{\'a}s, B{\'e}la},
       title={Extremal graph theory},
      series={London Mathematical Society Monographs},
   publisher={Academic Press, Inc. [Harcourt Brace Jovanovich, Publishers],
  London-New York},
        date={1978},
      volume={11},
        ISBN={0-12-111750-2},
      review={\MR{506522}},
}

\bib{ChGrWi88}{article}{
      author={Chung, F.},
      author={Graham, R.},
      author={Wilson, R.},
       title={Quasi-random graphs},
        date={1989},
        ISSN={0209-9683},
     journal={Combinatorica},
      volume={9},
      number={4},
       pages={345\ndash 362},
         url={http://dx.doi.org/10.1007/BF02125347},
      review={\MR{1054011}},
}

\bib{ChGr02}{article}{
      author={Chung, Fan},
      author={Graham, Ronald},
       title={Sparse quasi-random graphs},
        date={2002},
        ISSN={0209-9683},
     journal={Combinatorica},
      volume={22},
      number={2},
       pages={217\ndash 244},
         url={http://dx.doi.org/10.1007/s004930200010},
        note={Special issue: Paul Erd{\H{o}}s and his mathematics},
      review={\MR{1909084 (2003d:05110)}},
}

\bib{ChGr08}{article}{
      author={Chung, Fan},
      author={Graham, Ronald},
       title={Quasi-random graphs with given degree sequences},
        date={2008},
        ISSN={1042-9832},
     journal={Random Structures \& Algorithms},
      volume={32},
      number={1},
       pages={1\ndash 19},
         url={http://dx.doi.org/10.1002/rsa.20188},
      review={\MR{MR2371048 (2009a:05189)}},
}

\bib{CoFoZh13+}{article}{
      author={Conlon, David},
      author={Fox, Jacob},
      author={Zhao, Yufei},
       title={Extremal results in sparse pseudorandom graphs},
        date={2014},
        ISSN={0001-8708},
     journal={Adv. Math.},
      volume={256},
       pages={206\ndash 290},
         url={http://dx.doi.org/10.1016/j.aim.2013.12.004},
      review={\MR{3177293}},
}

\bib{CoFoZh13+b}{article}{
      author={Conlon, David},
      author={Fox, Jacob},
      author={Zhao, Yufei},
       title={A relative {S}zemer\'edi theorem},
        date={2015},
        ISSN={1016-443X},
     journal={Geom. Funct. Anal.},
      volume={25},
      number={3},
       pages={733\ndash 762},
         url={http://dx.doi.org/10.1007/s00039-015-0324-9},
      review={\MR{3361771}},
}

\bib{Er79}{inproceedings}{
      author={Erd{\H o}s, Paul},
       title={Some old and new problems in various branches of combinatorics},
        date={1979},
   booktitle={{P}roc.\ 10th {S}outheastern {C}onference on {C}ombinatorics,
  {G}raph {T}heory and {C}omputing},
   publisher={Utilitas Math.},
     address={Winnipeg, Man.},
       pages={19\ndash 37},
}

\bib{FrKr12}{article}{
      author={Frieze, Alan},
      author={Krivelevich, Michael},
       title={Packing {H}amilton cycles in random and pseudo-random
  hypergraphs},
        date={2012},
        ISSN={1042-9832},
     journal={Random Structures \& Algorithms},
      volume={41},
      number={1},
       pages={1\ndash 22},
         url={http://dx.doi.org/10.1002/rsa.20396},
      review={\MR{2943424}},
}

\bib{FrKrLo12}{article}{
      author={Frieze, Alan},
      author={Krivelevich, Michael},
      author={Loh, Po-Shen},
       title={Packing tight {H}amilton cycles in 3-uniform hypergraphs},
        date={2012},
        ISSN={1042-9832},
     journal={Random Structures \& Algorithms},
      volume={40},
      number={3},
       pages={269\ndash 300},
         url={http://dx.doi.org/10.1002/rsa.20374},
      review={\MR{2900140}},
}

\bib{KoMoScTa14+}{unpublished}{
      author={Kohayakawa, Y.},
      author={Mota, G.~O.},
      author={Schacht, M.},
      author={Taraz, A.},
       title={Counting results for sparse pseudorandom hypergraphs~{II}},
        note={submitted},
}

\bib{KoRo03}{incollection}{
      author={Kohayakawa, Y.},
      author={R{\"o}dl, V.},
       title={Szemer\'edi's regularity lemma and quasi-randomness},
        date={2003},
   booktitle={Recent advances in algorithms and combinatorics},
      series={CMS Books Math./Ouvrages Math. SMC},
      volume={11},
   publisher={Springer},
     address={New York},
       pages={289\ndash 351},
         url={http://dx.doi.org/10.1007/0-387-22444-0_9},
      review={\MR{1952989 (2003j:05065)}},
}

\bib{KoRoSi04}{article}{
      author={Kohayakawa, Y.},
      author={R{\"o}dl, V.},
      author={Sissokho, P.},
       title={Embedding graphs with bounded degree in sparse pseudorandom
  graphs},
        date={2004},
        ISSN={0021-2172},
     journal={Israel J. Math.},
      volume={139},
       pages={93\ndash 137},
      review={\MR{2041225 (2004m:05243)}},
}

\bib{Ni01}{article}{
      author={Nikiforov, V.},
       title={On the edge distribution of a graph},
        date={2001},
        ISSN={0963-5483},
     journal={Combin. Probab. Comput.},
      volume={10},
      number={6},
       pages={543\ndash 555},
         url={http://dx.doi.org/10.1017/S0963548301004837},
      review={\MR{1869845}},
}

\bib{Ro86}{article}{
      author={R{\"o}dl, Vojt{\v{e}}ch},
       title={On universality of graphs with uniformly distributed edges},
        date={1986},
        ISSN={0012-365X},
     journal={Discrete Math.},
      volume={59},
      number={1-2},
       pages={125\ndash 134},
      review={\MR{837962 (88b:05098)}},
}

\bib{Th87a}{incollection}{
      author={Thomason, Andrew},
       title={Pseudorandom graphs},
        date={1987},
   booktitle={Random graphs '85 ({P}ozna\'n, 1985)},
      series={North-Holland Math. Stud.},
      volume={144},
   publisher={North-Holland},
     address={Amsterdam},
       pages={307\ndash 331},
      review={\MR{89d:05158}},
}

\bib{Th87b}{incollection}{
      author={Thomason, Andrew},
       title={Random graphs, strongly regular graphs and pseudorandom graphs},
        date={1987},
   booktitle={Surveys in combinatorics 1987 ({N}ew {C}ross, 1987)},
      series={London Math. Soc. Lecture Note Ser.},
      volume={123},
   publisher={Cambridge Univ. Press},
     address={Cambridge},
       pages={173\ndash 195},
      review={\MR{88m:05072}},
}

\end{biblist}
\end{bibdiv}

\end{document}